\theoremstyle{plain}
\newtheorem{theorem}{Theorem}
\newtheorem{observation}{Observation}
\newtheorem{lemma}{Lemma}
\newtheorem{conjecture}{Conjecture}
\theoremstyle{definition}
\DeclareMathOperator{\interior}{int}
\DeclareMathOperator{\relint}{rel\,int}
\DeclareMathOperator{\bd}{bd}
\DeclareMathOperator{\aff}{aff}
\DeclareMathOperator{\convex}{conv}
\newcommand{\R}{\mathbb{R}}
\newcommand{\half}{\tfrac{1}{2}}
\newcommand{\ipr}[2]{\langle #1, #2 \rangle}
\newcommand{\norm}[1]{\lVert#1\rVert}
\newcommand{\abs}[1]{\lvert#1\rvert}
\begin{document}

\bibliographystyle{amsplain}

\title{Midpoint sets contained in the unit sphere of a normed space}
\author{Konrad J.\ Swanepoel}
\address{Department of Mathematics,
London School of Economics and Political Science,
Houghton Street,
London WC2A 2AE,
United Kingdom}
\email{k.swanepoel@lse.ac.uk}
\subjclass[2000]{Primary 52A15, Secondary 52A20}
\keywords{Minkowski space, midpoint set, convex position}

\begin{abstract}
The midpoint set $M(S)$ of a set $S$ of points is the set of all midpoints of pairs of points in $S$.
We study the largest cardinality of a midpoint set $M(S)$ in a finite-dimensional normed space, such that $M(S)$ is contained in the unit sphere, and $S$ is outside the closed unit ball.
We show in three dimensions that this maximum (if it exists) is determined by the facial structure of the unit ball.
In higher dimensions no such relationship exists.
We also determine the maximum for euclidean and sup norm spaces.
\end{abstract}

\maketitle

\section{Introduction and main results}

For any set of points $S$ in a vector space, the {\em midpoint set} of $S$ is
\[ M(S) = \{ \half (x+y) : x,y\in S, x\neq y\}.\]
We study sets such that all their midpoints are unit vectors in a finite-dimensional normed space.
This work is contained in the author's PhD thesis \cite{Swanepoel1997}.
Since recently some work appeared \cite{Halman2007, Eisenbrand2008} dealing with related questions, the author decided to write this up as a paper.

First note that if the space is not strictly convex, there exists an infinite set whose midpoint set is a set of unit vectors:
Choose distinct points on a line segment on the boundary of the unit ball.
If however $X$ is strictly convex, it follows from the strict triangle inequality that there is at most one point in $S$ of norm $\leq 1$.

We therefore only look at the cases where all the vectors in $S$ are of norm larger than one.
We say that a set of vectors $S\subseteq X$ is an {\em M-set} if $M(S)$ consists of unit vectors, and each vector in $S$ is of norm $>1$.
Let $m(X)$ be the largest $m$ such that there exists an M-set of cardinality $m$ in $X$, if there is such a largest $m$.
If there is no such largest $m$, we set $m(X)=\infty$.

In two dimensions, $m(X)$ does not distinguish between different norms.

\begin{theorem}\label{helly:th2}
We have $m(X)=3$ for any two-dimensional Minkowski space $X$.
\end{theorem}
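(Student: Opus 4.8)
The plan is to establish the two inequalities $m(X)\ge 3$ and $m(X)\le 3$ separately; throughout let $B$ denote the closed unit ball, so that $M(S)\subseteq\partial B$ means every midpoint is a unit vector. For the lower bound I would exhibit a $3$-element M-set, reducing its construction to finding three unit vectors summing to zero. Fixing a unit vector $p_1$ and letting $q$ traverse an arc of $\partial B$ from $p_1$ to $-p_1$, the function $\norm{p_1+q}$ varies continuously from $\norm{2p_1}=2$ down to $\norm{0}=0$, so by the intermediate value theorem there is a $q=p_2$ with $\norm{p_1+p_2}=1$; then $p_3:=-(p_1+p_2)$ is again a unit vector and $p_1+p_2+p_3=0$. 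Putting $x_i:=-2p_i$ gives $\norm{x_i}=2>1$, while $\half(x_i+x_j)=-(p_i+p_j)=p_k$ is a unit vector for each pair, so $\{x_1,x_2,x_3\}$ is an M-set and $m(X)\ge 3$.

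For the upper bound it suffices to rule out a $4$-element M-set $S=\{x_1,x_2,x_3,x_4\}$: any subset $S'$ of an M-set satisfies $M(S')\subseteq M(S)\subseteq\partial B$ and still consists of vectors of norm $>1$, so it is itself an M-set, and an M-set of size $\ge 4$ would contain a $4$-element one. The first key step, which I expect to hold in every dimension, is that $S$ is in convex position. Indeed, if some point were a convex combination of the others, say $x_0=\sum_i\lambda_i x_i$ with $\lambda_i\ge 0$ and $\sum_i\lambda_i=1$, then the identity $x_0=\sum_i\lambda_i\bigl(\half(x_0+x_i)\bigr)$ would present $x_0$ as a convex combination of the midpoints $\half(x_0+x_i)\in\partial B\subseteq B$, forcing $x_0\in B$ and contradicting $\norm{x_0}>1$. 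In particular no three of the points are collinear, so $S$ is the vertex set of a genuine convex quadrilateral.

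The second step uses that the midpoints lie \emph{exactly} on $\partial B$. For a pair $x_i,x_j$ choose a supporting functional $f$ of $B$ at $\half(x_i+x_j)$, normalized so that $f\le 1$ on $B$ and $f\bigl(\half(x_i+x_j)\bigr)=1$; then $f(x_i)+f(x_j)=2$. For any other point $x_k$ both midpoints $\half(x_i+x_k)$ and $\half(x_j+x_k)$ lie in $B$, so $f(x_k)\le 2-f(x_i)=f(x_j)$ and likewise $f(x_k)\le f(x_i)$; thus $x_i$ and $x_j$ are both $f$-maximal among the four points. I would now apply this to a \emph{diagonal} pair, say $\{x_1,x_3\}$: the two diagonals of the quadrilateral cross at an interior point $O'=\alpha x_1+(1-\alpha)x_3=\beta x_2+(1-\beta)x_4$ with $\alpha,\beta\in(0,1)$. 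Since $f(x_2),f(x_4)\le\min(f(x_1),f(x_3))$, evaluating $f$ at the two expressions for $O'$ gives $\beta f(x_2)+(1-\beta)f(x_4)\le\min(f(x_1),f(x_3))\le\alpha f(x_1)+(1-\alpha)f(x_3)$, and as these two ends are both $f(O')$ the inequalities collapse, forcing $f(x_1)=f(x_2)=f(x_3)=f(x_4)$. Then all four points lie on the line $\{f=f(x_1)\}$, contradicting convex position, so no $4$-element M-set exists and $m(X)\le 3$.

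The main obstacle is the second step, and it is precisely here that two-dimensionality is essential: the contradiction rests on the fact that the two diagonals of a planar convex quadrilateral must cross, which has no analogue for four points in convex position in higher dimensions (a tetrahedron has no crossing diagonals), matching the abstract's claim that the behaviour changes above dimension two. The only routine matters left to check are the continuity and endpoint values in the intermediate value argument, the existence of the supporting functional at a boundary point, and the elementary manipulation of the inequalities at the crossing point $O'$.
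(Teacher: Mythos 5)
Your proof is correct, and its upper-bound half takes a genuinely different route from the paper's. The lower bounds essentially coincide: both come down to three unit vectors summing to zero, doubled; the paper produces them from an affinely regular hexagon inscribed in the unit disc, you from the intermediate value theorem, and either works. For the upper bound the paper invokes its Lemma~\ref{polytope}: a finite M-set is in convex position (your convex-position step is essentially the first half of that lemma's proof), and if the open segment joining two points of the M-set lies in the relative interior of a $2$-face of their convex hull, then the unit ball has a $2$-face in that plane --- absurd in dimension two, where the boundary of the ball is a curve. That lemma is in turn proved via Lemma~\ref{normface} and a sharpened Carath\'eodory theorem (Lemma~\ref{caramod}). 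You bypass this machinery entirely with a dual argument: a supporting functional $f$ at the midpoint of a diagonal makes both endpoints of that diagonal $f$-maximal among the four points, and since the diagonals of a planar convex quadrilateral cross, all four $f$-values are forced equal, putting the four points on a line and contradicting convex position. What each approach buys: yours is self-contained and more elementary, needing only the supporting hyperplane theorem; the paper's lemma is dimension-free and is reused for the three-dimensional results (Theorems~\ref{polyth} and \ref{threelower}), so the paper gets the planar case for free once that general tool is in place. Two routine points you rightly flag as checkable: relabel so that $\{x_1,x_3\}$ really is a diagonal pair of the quadrilateral, and note that distinctness of the three points in your IVT construction follows since $\norm{p_1+p_2}=1$ rules out $p_2=p_1$ and $p_3\in\{p_1,p_2\}$ would force one of the $p_i$ to have norm $2$.
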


In dimensions higher than two, $m(X)=\infty$ is possible.

\begin{theorem}\label{infvectors}
If the unit ball of a Minkowski space has a non-polytopal proper face, then there exists an infinite M-set.
\end{theorem}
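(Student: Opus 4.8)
The plan is to strip away the norm and reduce to a purely affine statement about the face, and then to build the infinite M-set by pushing extreme points of the face slightly outward. Let $F$ be the given non-polytopal proper face of the unit ball $B$, and let $H=\{x:\ipr{\phi}{x}=1\}$ be a supporting hyperplane with $F=B\cap H$. Every $x\in H$ satisfies $\norm{x}\ge 1$, with equality precisely when $x\in F$; hence, writing $A=\aff F\subseteq H$, one has $F=\{x\in A:\norm{x}\le 1\}$. The first step is the remark that it suffices to produce infinitely many points of $A\setminus F$ whose pairwise midpoints all lie in $F$: such points automatically have norm $>1$, each midpoint lies on the unit sphere, and applying an affine functional vanishing on $A$ to the relation ``all midpoints lie in $A$'' forces every point of an infinite configuration into $A$ (from $\psi(s_i)+\psi(s_j)=0$ for all pairs). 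Thus the norm disappears, and we are reduced to the affine problem of finding infinitely many points outside a non-polytopal convex body $F\subseteq A$ with all pairwise midpoints in $F$.

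Next I would pass to a non-polytopal face of $B$ of smallest possible dimension. Since a face of a face of $B$ is again a face of $B$, and since every such face lies in a supporting hyperplane of the type above, this loses no generality; the advantage is that now every proper face of $F$ is a polytope. The body $F$ has infinitely many extreme points, so after passing to a convergent subsequence we may fix extreme points $e_1,e_2,\dots\to b$. I would then extract an infinite subsequence such that no two of the chosen points, and none of them together with the limit $b$, lie on a common proper face of $F$. This is where a combinatorial (infinite Ramsey) argument enters: either the desired pairwise-``non-conflicting'' subsequence exists, or there is an infinite family of extreme points lying pairwise on common proper faces, and the latter should be incompatible with $F$ being a minimal non-polytopal face. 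For the resulting subsequence every chord midpoint $\half(e_i+e_j)$, as well as every $\half(e_i+b)$, lies in the relative interior of $F$.

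Finally, fix $c\in\relint F$ and push the extreme points radially outward, setting $s_i=e_i+t_i(e_i-c)$ for small $t_i>0$. Since $e_i$ is extreme and $c\in\relint F$, each $s_i$ lies outside $F$, so $\norm{s_i}>1$. The midpoint $\half(s_i+s_j)$ is $\half(e_i+e_j)$ displaced by an outward perturbation of size comparable to $t_i+t_j$, so it stays in $F$ as soon as this perturbation is smaller than the distance $\rho_{ij}$ from $\half(e_i+e_j)$ to $\bd F$. Writing $r_i=\inf_{j\neq i}\rho_{ij}$, the extraction of the previous step guarantees $r_i>0$, because for fixed $i$ the midpoints accumulate only at the interior point $\half(e_i+b)$, so the positive numbers $\rho_{ij}$ have a positive limit and hence a positive infimum; choosing each $t_i$ to be a small enough fraction of $r_i$ then makes all infinitely many midpoint constraints hold at once, and the $s_i$ form an infinite M-set. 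I expect the main obstacle to be exactly this last balancing act, together with the extraction that feeds it: one must satisfy infinitely many pairwise constraints simultaneously, keeping every $t_i$ strictly positive even though the margins $\rho_{ij}$ degenerate to $0$ as the points crowd toward $b$, and it is the guarantee $r_i>0$ from the Ramsey-type extraction that makes such a simultaneous choice possible.
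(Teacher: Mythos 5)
Your affine reduction (working inside $\aff F$, where every point outside $F$ automatically has norm $>1$, since $B\cap\aff F=F$) and your final perturbation scheme (taking each $t_i$ proportional to $r_i$) are both sound. The fatal problem is the extraction step: the assertion that an infinite family of extreme points lying pairwise on common proper faces ``should be incompatible with $F$ being a minimal non-polytopal face'' is false once $\dim F\geq 4$. Take $F$ to be the convex hull of the trigonometric moment curve, $F=\convex\{\gamma(t):t\in[0,2\pi)\}$ with $\gamma(t)=(\cos t,\sin t,\cos 2t,\sin 2t)$ --- exactly the curve this paper uses to prove Theorem~\ref{ex4d}. As the computation there shows, for any $s\neq t$ the nonnegative trigonometric polynomial $\bigl(1-\cos(\,\cdot\,-s)\bigr)\bigl(1-\cos(\,\cdot\,-t)\bigr)$ is affine in $\gamma$, so it exposes the chord $[\gamma(s),\gamma(t)]$ as a face of $F$; conversely every supporting functional restricts on the curve to a nonnegative trigonometric polynomial of degree at most $2$, which has at most two zeros, so every proper face of $F$ is a point or a chord. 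Hence $F$ is non-polytopal, all of its proper faces are polytopes --- so it can perfectly well be the minimal non-polytopal face; for instance it is a facet of the centrally symmetric $5$-dimensional body $\convex\bigl((F'\times\{1\})\cup(-F'\times\{-1\})\bigr)$, where $F'$ is the translate of $F$ with centroid $0$ --- and yet \emph{every} pair of extreme points of $F$ lies on a common proper face. For this $F$ your Case A is empty (there is not even one ``non-conflicting'' pair), the clique branch of the Ramsey dichotomy occurs with no contradiction whatsoever, and indeed every midpoint $\half\bigl(\gamma(s)+\gamma(t)\bigr)$ lies on $\bd F$ rather than in $\relint F$, so no outward perturbation of extreme points can ever produce an M-set from them.

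The underlying issue is that passing to faces of minimal dimension is the wrong dimension reduction. The paper reduces by \emph{sections} instead: a convex body all of whose two-dimensional sections are polygons is a polytope (Klee), so a non-polytopal face $F$ admits a $2$-plane $P\subseteq\aff F$ for which $F\cap P$ is a non-polygonal convex disc, and points of $P\setminus F$ still have norm $>1$. In the plane your scheme genuinely works: extreme points cannot pairwise share boundary segments in fours (the diagonals of a convex quadrilateral cross in the relative interior), so the clique branch is bounded, an infinite ``independent'' subsequence exists, and your perturbation finishes the proof (the paper argues the planar case directly via the dichotomy ``infinitely many boundary segments or a strictly convex arc''). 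So your argument becomes correct if the minimal-face reduction is replaced by the two-dimensional-section reduction; as written, it fails on precisely the class of examples from which Theorem~\ref{ex4d} is built.
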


However, it is easily seen that an M-set is always countable.

\begin{observation}
Any M-set in a Minkowski space is countable.
\end{observation}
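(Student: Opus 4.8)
The plan is to stratify $S$ according to how far its points lie from the unit sphere and to show that each stratum is finite. For $\epsilon>0$ put $S_\epsilon=\{x\in S:\norm{x}\ge 1+\epsilon\}$. Since every point of an M-set has norm $>1$, we have $S=\bigcup_{n\ge 1}S_{1/n}$, so once each $S_\epsilon$ is shown to be finite, $S$ is a countable union of finite sets and hence countable. Thus the whole problem reduces to the following claim: a set $T$, all of whose pairwise midpoints are unit vectors and all of whose points have norm at least $1+\epsilon$, is finite.

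To prove the claim I would first observe that $T$ is bounded. Indeed, for any three distinct $a,b,c\in T$ the vectors $a+b$, $a+c$, $b+c$ all have norm $2$, and from the identity $2a=(a+b)+(a+c)-(b+c)$ the triangle inequality gives $\norm{a}\le 3$. Hence $T$ lies in a bounded subset of the finite-dimensional space $X$.

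Now suppose, for contradiction, that $T$ is infinite. By compactness (Bolzano--Weierstrass) there is a sequence of distinct points $x_1,x_2,\dots\in T$ converging to some $p\in X$. The key point is to pin down $\norm p$. Fixing an index $k$ and letting $n\to\infty$ in $\norm{x_k+x_n}=2$ (valid for $n\ne k$) gives $\norm{x_k+p}=2$ for every $k$; letting now $k\to\infty$ gives $\norm{2p}=2$, so $\norm p=1$. But $\norm{x_n}\ge 1+\epsilon$ for all $n$ forces $\norm p=\lim_n\norm{x_n}\ge 1+\epsilon>1$, a contradiction. Hence $T$ is finite, which completes the argument.

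The only real obstacle is this middle step, namely showing that an infinite such $T$ would have to accumulate at a point of norm exactly $1$; the resolution is exactly the two-stage limit above, which converts the pairwise midpoint condition into the single equation $\norm{2p}=2$. Everything else (the boundedness estimate and the reduction to the strata $S_{1/n}$) is routine. I do not expect the possible failure of strict convexity of $X$ to cause any difficulty, since the argument never uses strict convexity; it relies only on continuity of the norm and compactness of bounded sets in finite dimensions.
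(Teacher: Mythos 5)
Your proof is correct, and it shares the paper's overall skeleton---the same stratification $S=\bigcup_{n\geq 1}S_{1/n}$, reducing everything to the finiteness of a set $T$ with $\norm{x}\geq 1+\epsilon$ and $\norm{x+y}=2$ for distinct $x,y\in T$---but your mechanism for that finiteness step is genuinely different. The paper's proof is a single inequality: $\norm{x-y}+2=\norm{x-y}+\norm{x+y}\geq\norm{2x}\geq 2+2\epsilon$, so distinct points of $T$ are at distance at least $2\epsilon$ from one another, and a bounded, $2\epsilon$-separated set in a finite-dimensional space is finite. You instead argue by contradiction via Bolzano--Weierstrass: an infinite bounded $T$ would contain a sequence of distinct points converging to some $p$, and your two-stage limit in $\norm{x_k+x_n}=2$ (first $n\to\infty$, then $k\to\infty$) forces $\norm{p}=1$, contradicting $\norm{p}=\lim_n\norm{x_n}\geq 1+\epsilon$. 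Both routes are sound and both invoke finite-dimensionality exactly once, but they buy slightly different things: the paper's separation bound is quantitative, immediately yielding an explicit packing bound on $\# T$ in terms of $\epsilon$ and the dimension, whereas your compactness argument is softer and gives finiteness only. One cosmetic remark: your boundedness identity $2a=(a+b)+(a+c)-(b+c)$ presupposes that $T$ has at least three points, which is harmless since otherwise $T$ is trivially finite; the simpler estimate $\norm{x}\leq\norm{x+y}+\norm{y}=2+\norm{y}$ for one fixed $y$ works with two.
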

\begin{proof}
It is sufficient to show that a set $S$ of vectors of norm $\geq 1+\epsilon$ satisfying $\norm{x+y}=2$ for distinct $x,y\in S$, is finite (for any $\epsilon>0$).
But such a set is obviously bounded, and for any two vectors $x,y\in S$ we have
$\norm{x-y}+2 = \norm{x-y}+\norm{x+y}\geq \norm{2x}\geq 2+2\epsilon$.
\end{proof}

The following theorem provides a converse for Theorem~\ref{infvectors} in the case of $3$-dimensional spaces.
However, we suspect that there is a $4$-di\-men\-sion\-al strictly convex, smooth space admitting an infinite M-set; see Theorem~\ref{ex4d} for a weaker result.

\begin{theorem}\label{polyth}
Let the unit ball of a $3$-dimensional Minkowski space $X$ have $2$-faces, all of which are polygonal.
Then any M-set is finite.
Furthermore,
\begin{equation*}
m(X) = \sup\{n+1:\text{ The unit ball has an $n$-gonal $2$-face}\}
\end{equation*}
\end{theorem}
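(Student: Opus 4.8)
The plan is to prove the two halves separately: finiteness together with the upper bound $m(X)\le\sup\{n+1\}$, and the matching construction, the upper bound being the substantial direction. The engine throughout is a single observation about supporting functionals at midpoints. Fix an M-set $S$; for distinct $x,y\in S$ the midpoint condition is $\norm{x+y}=2$, so $c:=\half(x+y)\in\partial B$. Suppose $\phi$ is a supporting functional of $B$ at $c$, normalised so that $\max_B\phi=\phi(c)=1$. Then I claim $\phi(z)\le 1$ for every third point $z\in S$: the vectors $\half(x+z)$ and $\half(y+z)$ are unit vectors, whence $\phi(\half(x+z))\le 1$ and $\phi(\half(y+z))\le 1$, and since $\half(x+z)+\half(y+z)=c+z$ we get $1+\phi(z)=\phi(c+z)\le 2$. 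Thus the supporting functional at any midpoint is $\le 1$ on all of $S$ except possibly at the two points generating that midpoint, where it sums to $2$; moreover $\phi(z)=1$ exactly when both $\half(x+z)$ and $\half(y+z)$ lie in the exposed face $\{u\in B:\phi(u)=1\}$.

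The combinatorial heart is a planar estimate. Let $P$ be a convex $n$-gon in a plane; I claim that if finitely many points lie strictly outside $P$ and all their pairwise midpoints lie in $P$, then there are at most $n$ of them. To each such point $s$ assign the set $V(s)$ of edges of $P$ whose supporting line strictly separates $s$ from $P$; this is nonempty since $P$ is the intersection of its edge half-planes. If an edge $e$ lay in $V(s)\cap V(s')$, then $s$ and $s'$ would both be on the far side of the line carrying $e$, forcing their midpoint to the far side too, contradicting its membership in $P$. Hence the $V(s)$ are pairwise disjoint nonempty subsets of the $n$ edges, so there are at most $n$ points.

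Assembling these is the main obstacle. Using the functional fact, I want to produce a supporting functional $\phi$ exposing a (necessarily polygonal) $2$-face $F$ such that $S\subseteq\{\phi\le 1\}$ and as many points of $S$ as possible lie on $H=\{\phi=1\}$. The points of $S\cap H$ lie strictly outside the polygon $F=H\cap B$ (their norm exceeds $1$) and their pairwise midpoints are unit vectors in $H$, hence lie in $F$; the planar estimate then gives $\abs{S\cap H}\le n$ when $F$ is an $n$-gon. The real work is to show that at most one point of $S$ has $\phi<1$, so that $\abs{S}\le n+1$: two points $p,q$ below $H$ have their midpoint on another face $F'$, and this must be excluded by combining the extremal choice of $\phi$ with the functional fact applied at $F'$ (which would relocate the configuration to a face carrying at least as many points of $S$), while midpoints that land on an edge or vertex rather than in the relative interior of a $2$-face require a separate perturbation to an adjacent $2$-face. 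Carrying out this analysis amounts to showing that every M-set is organised around one polygonal $2$-face together with a single exceptional apex; since a polygon has finitely many edges, this yields finiteness of $S$ at the same time, complementing Theorem~\ref{infvectors}.

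For the lower bound, given an $n$-gonal $2$-face $F$ with supporting hyperplane $H$, I place $n$ points in $H$, one just beyond each edge of $F$ (for instance each edge-midpoint pushed outward by a small $\varepsilon$), so that every pairwise midpoint lies in the relative interior of $F$ and every point has norm $>1$. I then adjoin one further point $s_0\notin H$ whose midpoints with all $n$ points lie on $\partial B$, i.e.\ $\norm{s_0+s_i}=2$ for each $i$; the sup-norm cube is the prototype, where $F$ is a square, the four points sit in the plane of the top facet and $s_0=(0,0,-3)$ lies below so that the four mixed midpoints fall on the bottom facet, and in general the existence of $s_0$ follows from a short continuity and parameter-count argument since the $n$ conditions $\norm{s_0+s_i}=2$ carve out a nonempty set in the three-dimensional space of choices once the $s_i$ are, if necessary, slightly adjusted. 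This gives an M-set of size $n+1$, so $m(X)\ge\sup\{n+1\}$, matching the upper bound and establishing equality (with both sides $\infty$ when $B$ carries $n$-gonal faces of unbounded size).
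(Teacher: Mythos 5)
Your two supporting observations (the functional inequality $\phi(z)\le 1$ at every third point, and the planar bound of $n$ points outside an $n$-gon with pairwise midpoints inside it) are both correct, and the second one is essentially the paper's counting step in the plane of the distinguished face. But the proof has two genuine gaps, one in each direction.

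For the upper bound, the entire difficulty is concentrated in the step you explicitly defer: showing that at most one point of $S$ lies off the hyperplane $H$, i.e.\ that the M-set is ``one polygonal face plus one apex.'' Your sketch (``extremal choice of $\phi$,'' ``relocate the configuration to a face carrying at least as many points'') is not an argument; it is a restatement of what must be proved, and it is not clear it can be carried out with supporting functionals alone, since the midpoints of points below $H$ can land anywhere on the sphere, including on edges and vertices of $\bd B$ or on faces unrelated to $F$. The paper resolves exactly this point by a combinatorial detour: an M-set is the vertex set of a polytope every one of whose vertex-pairs spans a boundary segment (Lemma~\ref{polytope}), such $3$-polytopes are classified as combinatorial triangular prisms or polygonal pyramids (Zamfirescu, Lemma~\ref{threepolytope}), and the prism is killed by central symmetry of the unit ball (three quadrilateral faces would force three $2$-faces of $B$ in planes that cannot coexist symmetrically). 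The pyramid is precisely the ``face plus apex'' structure you want; without this classification (or a substitute for it) your argument does not close, and the finiteness assertion, which you derive from the same unproved structure, is also left open.

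For the lower bound, the existence of the extra point $s_0$ is the nontrivial content, and your justification is wrong as stated: the conditions $\norm{s_0+s_i}=2$, $i=1,\dots,n$, are $n$ codimension-one constraints on a point of $\R^3$, so for $n>3$ a parameter count suggests \emph{no} solution, not a nonempty one. What saves the construction is flatness: one must force all $n$ midpoints $\half(s_0+s_i)$ into the \emph{reflected} face $-F$, and the existence of such an $s_0$ amounts to fitting a homothetic copy of $\{s_1,\dots,s_n\}$ with ratio $-\half$ inside $\relint F$. The paper proves this via Helly's theorem in the plane (Lemma~\ref{polylower}): the sets $P_i$ of admissible homothety centres for each $s_i$ are convex, any three of them meet (the case $n=3$, where the centroid works), hence all $n$ meet. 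Your cube prototype has exactly this structure, but the general case needs the Helly argument, not continuity.
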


The only possibility for three-dimensional spaces not yet covered, is where the unit ball does not have a $2$-face.

\begin{theorem}\label{threelower}
If the unit ball of a $3$-dimensional Minkowski space $X$ does not have a $2$-face, then $m(X)=4$.
\end{theorem}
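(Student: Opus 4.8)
The plan is to prove the two inequalities $m(X)\ge 4$ and $m(X)\le 4$ separately. Throughout write $B$ for the unit ball and $S=\bd B$ for the unit sphere.

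For the lower bound I would exhibit an M-set of four vectors whose common centroid is the origin. Using central symmetry of $B$, if $a,b,c\in S$ then automatically $\pm a,\pm b,\pm c\in S$, and these six points are exactly the edge-midpoints of the tetrahedron with vertices $x_1=a+b+c$, $x_2=a-b-c$, $x_3=-a+b-c$, $x_4=-a-b+c$ (one checks $\half(x_1+x_2)=a$, $\half(x_2+x_3)=-c$, and so on). Thus it suffices to choose linearly independent $a,b,c\in S$ for which the four combinations $a\pm b\pm c$ with an even number of minus signs all have norm $>1$; since these, together with their negatives, are the eight vertices $\pm a\pm b\pm c$ of the parallelepiped spanned by $2a,2b,2c$, it is enough that all eight lie outside $B$. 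I would obtain such a triple by a short compactness argument, taking $a,b,c$ suitably spread out as in the Euclidean model; the only point that is used is that $B$ has no $2$-face, so that no facet can trap these vertices inside. This produces an M-set of size $4$, hence $m(X)\ge 4$.

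The upper bound — that there is no M-set of size $5$ — is the heart of the matter. Suppose $\{x_1,\dots,x_5\}$ were such a set, with midpoints $m_{ij}=\half(x_i+x_j)\in S$. The first structural input is a per-pair dichotomy forced by the absence of $2$-faces: the line $\ell_{ij}=\aff\{x_i,x_j\}$ meets $B$ in a segment, and since $m_{ij}$ lies on this segment, either (i) $m_{ij}$ is an endpoint of $\ell_{ij}\cap B$ (the line is tangent there), or (ii) $\ell_{ij}\cap B$ is contained in an edge of $B$ with $m_{ij}$ in its relative interior — the possibility of a $2$-dimensional contact is excluded by hypothesis, via a supporting functional at $m_{ij}$. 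The second input is the affine octahedron structure: for any four of the points the six edge-midpoints split into three pairs symmetric about the centroid $g$ of those four points, so the three diagonals are chords of $S$ bisected by $g$. The plan is to exploit the coplanarity relations as well: the midpoints $m_{14},m_{24},m_{34}$ lie in a plane parallel to $\aff\{x_1,x_2,x_3\}$, and since $m_{i5}=m_{i4}+\half(x_5-x_4)$, the three points $m_{14},m_{24},m_{34}$ lie in $S\cap\bigl(S+\half(x_4-x_5)\bigr)$, the intersection of two translates of $S$. Restricting to the relevant planar sections and invoking the planar case (Theorem~\ref{helly:th2}) together with the fact that two translates of a planar convex curve with no parallel edges meet in at most two points, I would bound how many midpoints each section can carry, and then run this over the overlapping triples and $4$-subsets of the five points to force either two of the $x_i$ to coincide or a genuine $2$-face of $B$ to appear — either way a contradiction.

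The main obstacle is precisely the step where Euclidean intuition fails. In a Euclidean ball every chord bisected by a non-central point $g$ lies in the single plane through $g$ orthogonal to the centre-to-$g$ direction, so a $4$-point M-set is immediately forced to have its centroid at the origin; two $4$-subsets of a $5$-set then collide and the argument is finished. For a general body with no $2$-face this rigidity is false: a non-central point can bisect a whole curve of non-coplanar chords, so one cannot conclude $g=0$ directly. The crux of the proof is therefore to extract just enough rigidity from the no-$2$-face hypothesis — through the planar sections and the translate-intersection bound above — to substitute for the centroid argument, and the delicate point is to invoke the hypothesis exactly where a merely strictly convex body, or one carrying edges, would otherwise admit additional midpoints.
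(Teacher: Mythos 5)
Both halves of your proposal contain genuine gaps, and in both cases the missing piece is supplied in the paper by a tool you never use, namely Lemma~\ref{polytope}. For the upper bound you do not give a proof at all: you describe a plan and then explicitly concede that its crux --- extracting ``just enough rigidity'' from the no-$2$-face hypothesis to replace the Euclidean centroid argument --- is unresolved; that concession is exactly where the content of the theorem lies. The one concrete tool you do invoke also fails: the midpoints $m_{14},m_{24},m_{34}$ indeed lie in a plane $\Pi$ parallel to $\aff\{x_1,x_2,x_3\}$ and in $S\cap\bigl(S+\half(x_4-x_5)\bigr)$, but the two curves you would intersect inside $\Pi$, namely $S\cap\Pi$ and $\bigl(S+\half(x_4-x_5)\bigr)\cap\Pi$, are sections of $S$ by two \emph{different} parallel planes, one of which is then translated; they are not translates of one another, so ``two translates of a planar convex curve meet in at most two points'' does not apply --- and since absence of $2$-faces still permits $1$-faces, these sections need not be strictly convex or free of parallel edges anyway. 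The idea you are missing is that a finite M-set is the vertex set of a polytope $P$ (first part of Lemma~\ref{polytope}), and that if two of its points fail to span an edge of $P$, then the open segment between them lies in the relative interior of a $k$-face of $P$ with $k\geq 2$, whence the unit ball has a $k$-face in that affine hull (second part): this is impossible for $k=3$ (the unit sphere contains no $3$-dimensional convex set) and excluded by hypothesis for $k=2$. Hence $P$ is $2$-neighbourly, and in $\R^3$ this forces at most $4$ vertices: a $2$-neighbourly polygon is a triangle, and for a $3$-polytope $\binom{v}{2}\leq 3v-6$ gives $v\leq 4$. This disposes of $5$-point M-sets (hence of all larger and infinite ones, by passing to subsets) with no chord-by-chord analysis.

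For the lower bound your algebraic skeleton is exactly the paper's --- a $4$-point set with centroid $0$ whose six midpoints are $\pm a,\pm b,\pm c$ --- but the existence of unit vectors $a,b,c$ with all $\norm{\pm a\pm b\pm c}>1$ is precisely what must be proved, and ``a short compactness argument, taking $a,b,c$ suitably spread out as in the Euclidean model'' is not an argument: without an inner product ``spread out'' has no meaning, and compactness alone yields no strict lower bound on these norms. Your appeal to the no-$2$-face hypothesis here is also misplaced: Lemma~\ref{threelow} proves $m(X)\geq 4$ for \emph{every} Minkowski space of dimension at least $3$, with no assumption on faces. Its construction is concrete: pick a unit vector $x$ with supporting plane $x+H$, cut the sphere by $H_\lambda=H+\lambda x$ with $\tfrac{1}{3}<\lambda<1$, inscribe an affinely regular hexagon $\{y\pm a,y\pm b,y\pm c\}$ (where $a,b,c\in H$ and $a+b+c=0$) in the section, and take $\{y+2a,y+2b,y+2c,-3y\}$; the midpoints are $\pm(a-y),\pm(b-y),\pm(c-y)$, hexagon vertices up to sign, and the norm conditions follow from $\lambda\leq\norm{y}<1$, since then $\norm{-3y}\geq 3\lambda>1$ and $\norm{y+2a}\geq 2\norm{y+a}-\norm{y}=2-\norm{y}>1$. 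You should either adopt this construction or prove your existence claim honestly; as written, neither inequality in your proposal is established.
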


\begin{conjecture}
 There exists a strictly convex, smooth $4$-dimensional Minkowski space admitting an infinite M-set.
\end{conjecture}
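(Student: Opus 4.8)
The plan is to reduce the conjecture to a purely local construction near a single boundary point of the unit sphere, and then to analyse the resulting ``midpoint-defect'' equation. Work in $\R^4=\R^3\times\R$ with coordinates $(w,z)$, $w\in\R^3$. We will build a symmetric, smooth, strictly convex body $B$ (the unit ball) that, near a chosen boundary point $x_\infty$, is the epigraph $\{z\ge\phi(w)\}$ of a smooth, strictly convex $\phi\colon\R^3\to\R$ with $\phi(0)=0$, $\nabla\phi(0)=0$ and positive-definite Hessian $A=\mathrm{Hess}\,\phi(0)$; the region outside $B$ is then $\{z<\phi(w)\}$. Writing a candidate M-set point as $x_n=(w_n,\phi(w_n)-\delta_n)$ with $\delta_n>0$ (so that $x_n$ lies strictly outside $B$), the requirement that every midpoint $\half(x_i+x_j)$ lie on the sphere $z=\phi(w)$ becomes, after a short computation, the system
\[
\delta_i+\delta_j=\phi(w_i)+\phi(w_j)-2\phi\!\left(\tfrac{w_i+w_j}{2}\right)=:D(w_i,w_j)\qquad(i\neq j),
\]
with $\delta_n>0$ and the $w_n$ pairwise distinct. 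Strict convexity makes $D(w_i,w_j)>0$ automatically, so the task is to produce \emph{infinitely many} distinct $w_n\in\R^3$ and positive scalars $\delta_n$ solving this additive system, with $\phi$ extendable to a globally symmetric, smooth, strictly convex body. Once this is done, the $x_n$ cluster near $x_\infty$, all have norm $>1$, and all their midpoints are unit vectors, giving an infinite M-set.

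Two naive strategies must be ruled out first, and doing so explains why dimension four is the critical case. If one asks for constant defect $D(w_i,w_j)\equiv c$, then all $\delta_i$ are equal and the $w_n$ form an equilateral-type set for the gauge induced by $\phi$; by the finiteness of equilateral sets in finite-dimensional normed spaces this forces a finite configuration. If instead one tries to place the $w_n$ along a smooth regular curve $\gamma$, so that $\phi\bigl(\tfrac{\gamma(s)+\gamma(t)}2\bigr)$ must be additively separable in $s,t$, one computes
\[
\partial_s\partial_t\,\phi\!\left(\tfrac{\gamma(s)+\gamma(t)}{2}\right)=\tfrac14\,\gamma'(s)^{\top}\,\mathrm{Hess}\,\phi\!\left(\tfrac{\gamma(s)+\gamma(t)}{2}\right)\,\gamma'(t),
\]
and the diagonal limit $t\to s$ yields $\tfrac14\,\gamma'(s)^{\top}A(s)\gamma'(s)$, which is strictly positive for a regular curve and a positive-definite Hessian. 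Separability, and hence any one-parameter solution, is therefore impossible. This is exactly the rigidity responsible for $m(X)=3$ in the plane (base dimension $1$) and $m(X)=4$ in strictly convex smooth three-space (base dimension $2$); the three-dimensional base appearing for $\R^4$ is the first in which the points can be spread genuinely in more than one parameter while accumulating.

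The construction I would therefore attempt is a \emph{self-similar, lacunary} family $w_n\to w_\infty=0$ with $\delta_n\to0$ on a geometric scale $\delta_n\asymp\rho^{\,n}$. Near $0$ one has $D(w_i,w_j)=\tfrac14(w_i-w_j)^{\top}A(w_i-w_j)+o(\lvert w_i-w_j\rvert^2)$, so to leading order the system reads $\delta_i+\delta_j\approx\tfrac14\lVert w_i-w_j\rVert_A^2$; for $i<j$ the right-hand side is dominated by $\delta_i$, forcing each $w_i$ to sit at $A$-distance $\asymp\rho^{\,i/2}$ from all later points, i.e.\ the tail $\{w_j:j>i\}$ to lie near an $A$-sphere about $w_i$. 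Such nested shell configurations have ample room in $\R^3$ but not in $\R^2$ or $\R^1$, which is the heuristic reason the conjecture should hold in dimension four. I would set up the exact equations, use the approximate configuration furnished by Theorem~\ref{ex4d} as a seed, and solve the full infinite system by an iterative fixed-point scheme that adjusts the higher-order jet of $\phi$ together with the positions $w_n$ and the defects $\delta_n$ scale by scale, exploiting the self-similar structure so that the correction at scale $n$ decouples asymptotically from earlier scales.

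The hard part—and the reason this remains a conjecture rather than a theorem—is to make the system hold \emph{exactly} for all of the infinitely many pairs while simultaneously keeping $\phi$ globally strictly convex and $C^\infty$, symmetric under $x\mapsto-x$, and with every $x_n$ genuinely outside $B$. The tension is intrinsic: at the accumulation point the defect $D$ is governed by the Euclidean form $A$, and in that regime the additive constraint $\delta_i+\delta_j=D(w_i,w_j)$ is of equilateral type, which the quadratic model forbids for infinite sets; the entire construction must live in the higher-order departure of $\phi$ from its quadratic part and must thread between these competing demands. Controlling the convergence of the iterative scheme, verifying that the limiting $\phi$ retains a positive-definite Hessian throughout and extends to a legitimate strictly convex smooth symmetric body, and ensuring the defects stay positive, are the delicate points on which a complete proof hinges.
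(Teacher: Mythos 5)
This statement is one of the paper's \emph{conjectures}: the paper offers no proof of it, and its closest result, Theorem~\ref{ex4d}, is strictly weaker (for each $n$ a \emph{different} smooth strictly convex $4$-space with $m(X)\geq n$, built from the moment curve). Your proposal is likewise not a proof, as you yourself concede in the final paragraph. The reduction is fine: writing $x_n=(w_n,\phi(w_n)-\delta_n)$ with $\delta_n>0$ does correctly turn the midpoint condition into $\delta_i+\delta_j=D(w_i,w_j)$ with $D(w_i,w_j)=\phi(w_i)+\phi(w_j)-2\phi\bigl(\half(w_i+w_j)\bigr)>0$. But everything after that is a programme, not an argument: the ``iterative fixed-point scheme that adjusts the higher-order jet of $\phi$ together with the positions and defects'' is never defined, carries no estimates, and nothing is verified about the limit --- that $\phi$ stays $C^\infty$ with positive-definite Hessian, that it extends to a centrally symmetric strictly convex smooth body, that the $\delta_n$ remain positive. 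That unverified step \emph{is} the conjecture. (A minor further slip: your mixed-derivative computation along a curve $\gamma$ only rules out a smooth one-parameter family satisfying the separability identity for all $s,t$; an infinite discrete subset of a curve is not excluded by it. This is motivational, but it is presented as a rigidity ``proof.'')

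Moreover, the one concrete ansatz you do commit to, $\delta_n\asymp\rho^n$, appears to be self-defeating by your own quadratic rigidity, so the heuristic core also needs repair. Since $w_n\to 0$, letting $j\to\infty$ in $\delta_i+\delta_j=D(w_i,w_j)$ gives $\delta_i=\tfrac14\norm{w_i}_A^2+O(\abs{w_i}^3)$, where $\norm{\cdot}_A$ is the norm of the Hessian form; polarizing then yields, for $i<j$,
\begin{equation*}
\ipr{w_i}{w_j}_A=\tfrac12\bigl(\norm{w_i}_A^2+\norm{w_j}_A^2-\norm{w_i-w_j}_A^2\bigr)=O(\abs{w_i}^3),
\end{equation*}
so the $A$-cosine between $w_i$ and $w_j$ is $O\bigl(\abs{w_i}^2/\abs{w_j}\bigr)$. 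Under geometric scaling $\abs{w_n}\asymp\rho^{n/2}$, every pair of indices in a dyadic block $[N,2N)$ has cosine $O(\rho^{1/2})$, forcing $N$ nearly pairwise $A$-orthogonal directions in $\R^3$ --- impossible for large $N$. (In the exact quadratic model this computation gives $\ipr{w_i}{w_j}_A=0$ for all pairs, i.e.\ infinitely many pairwise orthogonal vectors in $\R^3$, which is the clean form of the obstruction you allude to.) So ``nested shells have ample room in $\R^3$'' fails at geometric scales; any viable cascade must decay super-geometrically, say $\abs{w_{n+1}}=o(\abs{w_n}^2)$, a regime in which your shell picture no longer applies and the whole scheme would have to be rethought. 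In short: a correct reduction and a plausible research direction, but with a wrong quantitative ansatz and with the decisive analytic step entirely missing.
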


\begin{theorem}\label{ex4d}
For all $n\geq 1$ there exists a $4$-dimensional smooth, strictly convex Minkowski space $X$ such that $m(X)\geq n$.
\end{theorem}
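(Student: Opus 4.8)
The plan is to recast the existence of a large M-set as a single convex-body realisation problem, and then to build the body by interpolation. Fix $n$. I want points $x_1,\dots,x_n\in\R^4$ and a smooth, strictly convex, centrally symmetric body $K$ (the prospective unit ball) such that $m_{ij}:=\half(x_i+x_j)\in\bd K$ for all $i<j$ and $\norm{x_i}>1$ for all $i$. The outer-ness of the $x_i$ will be produced by tangency: since $K$ is smooth and strictly convex, the supporting hyperplane $T_{ij}$ at a boundary point $m_{ij}$ is unique and meets $K$ only in $m_{ij}$, so every other point of $T_{ij}$ has norm $>1$. Thus it suffices to arrange that $T_{ij}$ contains the chord direction $x_i-x_j$; then $x_i=m_{ij}+\half(x_i-x_j)$ and $x_j=m_{ij}-\half(x_i-x_j)$ both lie on $T_{ij}\setminus\{m_{ij}\}$ and hence outside $K$. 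So the theorem reduces to prescribing the points $m_{ij}$ (and, by symmetry, $-m_{ij}$) together with a supporting hyperplane at each through which the corresponding chord is tangent, and realising all of this by one smooth strictly convex symmetric $K$.

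For the realisation I would invoke a standard interpolation fact: a centrally symmetric finite point set in strictly convex position, together with a compatible choice of supporting hyperplane at each point (one that meets the convex hull only there), can be rounded into a smooth, strictly convex, symmetric body passing through the points with exactly those tangent hyperplanes. Granting this, the incidences $m_{ij}\in\bd K$ hold by construction, and only two conditions on the configuration remain: (a) the $\binom n2$ midpoints $m_{ij}$ and their reflections $-m_{ij}$ are in strictly convex position, so each is a vertex of their hull; and (b) at each $m_{ij}$ some vector in the interior of the normal cone of that vertex is orthogonal to $x_i-x_j$. Condition (b) is mild: a vertex of a polytope in $\R^4$ has a solid normal cone, which for generic data meets the hyperplane $(x_i-x_j)^\perp$.

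Condition (a) is the crux, and it cannot be avoided: the boundary of a strictly convex body contains no segment, so every midpoint of an M-set is automatically an extreme point, i.e.\ the midpoints are necessarily in strictly convex position. The task is therefore to exhibit, for every $n$, an $n$-point set in $\R^4$ whose $\binom n2$ pairwise midpoints are the vertices of a convex polytope. The natural source is the extremal \emph{flat} configuration behind Theorem~\ref{polyth}: $n$ points in a $2$-plane lying outside a convex polygon $P$, all of whose pairwise midpoints land inside $P$ and hence on a flat $2$-face of a unit ball. These midpoints are coplanar, so the configuration is degenerate precisely in the way forbidden by strict convexity. My plan is to keep the in-plane coordinates $y_i$ of this extremal configuration and to assign small transverse coordinates $z_i\in\R^2$, chosen so that the lifted midpoints $(\,\half(y_i+y_j),\,\half(z_i+z_j)\,)$ are pushed off the plane into strictly convex position. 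It is exactly this two-dimensional transverse room that is missing in $\R^3$, where Theorem~\ref{threelower} pins $m(X)=4$; the extra dimension is what allows the flat face to be curved without sacrificing midpoints.

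The main obstacle is to choose the transverse lift $z_i$ so that \emph{all} $\binom n2$ lifted midpoints simultaneously become vertices, together with securing the orthogonality in (b); since the number of midpoints grows quadratically while the transverse space stays two-dimensional, the vertices crowd together and the interpolated body must flatten as $n$ grows, approaching a genuine $2$-face. This is the reason the theorem claims only a finite, though arbitrarily large, value $m(X)\ge n$ for a space depending on $n$, rather than one space with an infinite M-set (the content of the conjecture above). Concretely I would fix the polygonal target, solve for a transverse lift putting the midpoints in strictly convex position, verify (a) and (b), and only then apply the interpolation lemma to produce $K$; controlling the lift quantitatively against the rounding is the step I expect to demand the most work.
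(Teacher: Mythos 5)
Your reduction is the right one, and it matches the skeleton of the paper's own argument: the paper also observes that it suffices to produce $x_1,\dots,x_n\in\R^4$ such that the $2\binom{n}{2}$ points $\pm(x_i+x_j)$ are the vertices of a (centrally symmetric) polytope, and then invokes exactly your ``interpolation fact'' (its Lemma~\ref{smoothing}: the vertex set of a centrally symmetric polytope lies on the sphere of some smooth, strictly convex norm). Your observation that strict convexity \emph{forces} the midpoints into convex position, so this is the only possible route, is also correct. The genuine gap is that you never produce the configuration, and that configuration is the entire mathematical content of the theorem. Your plan --- keep the planar coordinates of the flat extremal configuration from Theorem~\ref{polyth} and ``solve for a transverse lift'' $z_i\in\R^2$ putting all $\binom{n}{2}$ midpoints and their reflections simultaneously in strictly convex position --- is stated as a hope, with no argument that such a lift exists; as you yourself note, the midpoints crowd quadratically in a two-dimensional transverse space, so this is not a routine perturbation claim (it is close to the nontrivial questions on convex position in Minkowski sums studied in the Eisenbrand--Pach--Rothvo\ss--Sopher reference). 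The paper settles precisely this point by an explicit construction: take $x_i=\phi(t_i)$ on the moment curve $\phi(t)=(\cos t,\sin t,\cos 2t,\sin 2t)$ with $t_i\in[0,\pi/4]$, and verify by direct computation, using the auxiliary functions $p_{ij}(t)=\bigl(1-\cos(t-t_i)\bigr)\bigl(1-\cos(t-t_j)\bigr)$ as supporting functionals, that every $\pm(x_i+x_j)$ is a vertex of $K=\convex\{\pm(x_i+x_j)\}$. (Amusingly, the moment curve \emph{is} a transverse lift of points on a circle, so your instinct is in the right spirit; but the verification is the proof, and you have not done it.)

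A secondary point: your condition (b), the tangency requirement that the chord direction $x_i-x_j$ lie in a supporting hyperplane realized at $m_{ij}$, is an unnecessary complication, and your claim that it holds ``for generic data'' is itself shaky --- an open cone in $\R^4$ with apex at the origin need not meet a prescribed $3$-dimensional subspace $(x_i-x_j)^\perp$ at all, so (b) genuinely couples the normal fan of the hull to the chord directions and would need a real argument. The paper avoids this entirely: once the norm makes all $\norm{x_i+x_j}=2$, the strict triangle inequality in a strictly convex space implies at most one of the $x_i$ can have norm $\leq 1$; discard it (equivalently, start from $n+1$ points). This yields an M-set of size $\geq n$ with no control whatsoever over the tangent hyperplanes at the midpoints. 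Adopting that observation would let you delete condition (b) and the normal-cone discussion from your outline; the sole remaining task would still be the one identified above, namely an actual construction certifying condition (a).
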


We finally calculate $m(X)$ for euclidean and $\ell_\infty$ spaces of all dimensions.

\begin{theorem}\label{l2m}
For all $d\geq 1$, $m(\ell_2^d)=d+1$.
\end{theorem}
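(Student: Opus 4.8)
The plan is to prove the two inequalities $m(\ell_2^d)\ge d+1$ and $m(\ell_2^d)\le d+1$ separately, the first by an explicit construction and the second by a Gram-matrix inertia argument.

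For the lower bound I would look for a highly symmetric M-set consisting of $d+1$ vectors $x_0,\dots,x_d$ that all share a common norm $r>1$ and a common pairwise inner product. Expanding $\ipr{x_i+x_j}{x_i+x_j}=4$ and using $\norm{x_i}^2=r^2$ forces $\ipr{x_i}{x_j}=2-r^2$ for $i\ne j$; conversely, any configuration realizing this Gram matrix is automatically an M-set. The Gram matrix is $G=(2r^2-2)I+(2-r^2)J$, where $J$ is the all-ones matrix, and I would choose $r$ so that $G$ is positive semidefinite of rank exactly $d$, giving a regular-simplex configuration that embeds in $\R^d$. Its eigenvalues are $2r^2-2$ (with multiplicity $d$) and $(2r^2-2)+(2-r^2)(d+1)$ (on the all-ones vector); setting the latter equal to zero yields $r^2=2d/(d-1)$ for $d\ge 2$, and one checks $r^2>1$, so the $d+1$ realizing vectors have norm $r>1$ and unit midpoints. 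The one-dimensional case is handled by hand (e.g.\ $\{-2,4\}$).

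For the upper bound, let $S=\{x_1,\dots,x_k\}$ be any M-set, put $a_i=\norm{x_i}^2>1$, and form the Gram matrix $G$ with $G_{ij}=\ipr{x_i}{x_j}$. The midpoint condition gives $G_{ij}=2-\tfrac12(a_i+a_j)$ for $i\ne j$. The key step is the decomposition $G=H+E$, where $H_{ij}=2-\tfrac12(a_i+a_j)$ for \emph{all} $i,j$ and $E$ is diagonal with $E_{ii}=2(a_i-1)>0$. Here $H=2J-\tfrac12(a\mathbf{1}^\top+\mathbf{1}a^\top)$ has rank at most $2$, and evaluating its quadratic form in the two scalar coordinates $\mathbf{1}^\top v$ and $a^\top v$ exhibits an indefinite $2\times 2$ form, so $H$ has at most one negative eigenvalue.

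Now I would combine these facts. Since $G$ is a Gram matrix of vectors in $\R^d$, it is positive semidefinite of rank at most $d$, so its kernel $N$ has dimension at least $k-d$. For nonzero $v\in N$ we have $v^\top G v=0$, hence $v^\top H v=-v^\top E v<0$ because $E$ is positive definite; thus $H$ is negative definite on $N$. Since the dimension of a subspace on which a symmetric matrix is negative definite cannot exceed its number of negative eigenvalues, $\dim N\le 1$, giving $k-d\le 1$ and $k\le d+1$. I expect the main obstacle to be spotting the right splitting $G=H+E$ and recognizing that the strict inequality $a_i>1$ — precisely the defining property of an M-set — is exactly what makes $E$ positive definite and thereby forces negative-definiteness of $H$ on the kernel; once this is in place, the inertia bound closes the argument.
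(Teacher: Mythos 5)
Your proof is correct, and its core---the upper bound---takes a genuinely different route from the paper's. The paper deduces Theorem~\ref{l2m} from Theorem~\ref{l2bound}, a statement about \emph{arbitrary} distinct vectors whose pairwise sums have norm $1$, with no hypothesis on individual norms: there one takes an affine dependence $\sum_i\lambda_ix_i=0$, $\sum_i\lambda_i=0$ among $d+2$ points, converts the norm equations into the identity $\lambda_j(4\ipr{x_j}{x_j}-1)=\sum_i\lambda_i\ipr{x_i}{x_i}$, and sums over $j$ to force all $\lambda_j=0$. Dropping the norm hypothesis makes the plane genuinely exceptional (the bound there is $4$, attained by three unit vectors at mutual angles of $120^\circ$ together with the origin), so the paper must additionally invoke the planar Theorem~\ref{helly:th2} to settle $d=2$. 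Your splitting $G=H+E$, with $H$ of rank at most $2$ carrying at most one negative eigenvalue and $E$ diagonal, uses the M-set hypothesis $\norm{x_i}>1$ exactly once---to make $E$ positive definite---and the inertia count on the kernel of $G$ then yields $k\le d+1$ uniformly, with no case distinction in $d$. So the paper's route buys a stronger intermediate theorem (and an explanation of the planar anomaly), while yours buys a case-free, self-contained argument that isolates precisely where the defining strict inequality of an M-set enters. Your lower bound is essentially the paper's construction---the regular simplex with vertices scaled so that pairwise sums have norm $2$---only derived via the eigenvalues of the Gram matrix $(2r^2-2)I+(2-r^2)J$ rather than by symmetry; you should add one word that the realizing vectors are pairwise distinct (if $x_i=x_j$ then $r^2=2-r^2$, impossible since $r^2>1$), after which everything is complete.
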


\begin{theorem}\label{inftybound}
For all $d\geq 2$, $m(\ell_\infty^d)=2d-1$.
\end{theorem}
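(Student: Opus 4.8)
The plan is to prove the two inequalities $m(\ell_\infty^d)\ge 2d-1$ and $m(\ell_\infty^d)\le 2d-1$ separately, using that the unit ball is the cube $[-1,1]^d$, so an M-set is a set $S$ with each vector having some coordinate of absolute value $>1$ and with $\norm{x+y}=2$ (sup-norm) for all distinct $x,y\in S$. The observation driving everything is that if $x_i>1$ and $y_i>1$ for distinct $x,y\in S$ in some coordinate $i$, then $\norm{x+y}\ge x_i+y_i>2$, a contradiction. Hence for each coordinate $i$ at most one vector of $S$ has $i$-th coordinate $>1$, and at most one has $i$-th coordinate $<-1$. Viewing the $2d$ pairs (coordinate, sign) as \emph{slots}, each slot receives at most one vector of $S$, while each vector (having norm $>1$) occupies at least one slot; double counting incidences gives $\abs S\le 2d$ at once.

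For the lower bound I would exhibit an explicit M-set of size $2d-1$. With $e_1,\dots,e_d$ the standard basis, take the $2(d-1)$ vectors $2e_i+e_d$ and $-2e_i+e_d$ for $i=1,\dots,d-1$, together with the single vector $-3e_d$; each has norm $\ge2>1$. A direct check of the three kinds of pairs settles it: two vectors $\pm2e_i+e_d$ and $\pm2e_j+e_d$ with $i\ne j$ sum to a vector with entry $2$ in coordinate $d$ and entries $\pm2$ in coordinates $i,j$; the two vectors $2e_i+e_d$ and $-2e_i+e_d$ sum to $2e_d$; and any vector paired with $-3e_d$ gives $\pm2e_i-2e_d$. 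Every pairwise sum has norm exactly $2$, so $m(\ell_\infty^d)\ge 2d-1$.

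For the upper bound the slot argument already yields $\abs S\le 2d$, and the real work is to exclude an M-set of size exactly $2d$. In that case every slot holds exactly one vector and every vector occupies exactly one slot: write $p_i^+$ for the vector with $(p_i^+)_i>1$ and $p_i^-$ for the one with $(p_i^-)_i<-1$, and put $\alpha_i^+=(p_i^+)_i$, $\alpha_i^-=-(p_i^-)_i$, both $>1$. First I would bound these: for any $y\notin\{p_i^+,p_i^-\}$ (which exists since $2d\ge4$) we have $y_i\ge-1$ and $\alpha_i^++y_i\le\norm{p_i^++y}=2$, whence $\alpha_i^+\le3$, and symmetrically $\alpha_i^-\le3$. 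The crux is then to force the pair $\{p_i^+,p_i^-\}$ to attain its norm in coordinate $i$. To do this I note that every $y\ne p_i^+$ has $y_i<1$ (for $y\ne p_i^-$ the inequality $y_i\le 2-\alpha_i^+<1$ came from pairing with $p_i^+$, and $(p_i^-)_i<-1$); hence no pair avoiding $p_i^+$ can sum to $2$ in coordinate $i$, and likewise no pair avoiding $p_i^-$ can sum to $-2$. Consequently a pair attains a value $\pm2$ in coordinate $k$ only if it contains $p_k^+$ or $p_k^-$, and applied to $\{p_i^+,p_i^-\}$ this forces the resolving coordinate to be $i$ itself, giving $\alpha_i^+-\alpha_i^-=\pm2$.

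I expect this resolution step to be the main obstacle, since it is exactly where the geometry (which facet a given midpoint lies on) must be pinned down; once it is in place the contradiction is immediate, because $\alpha_i^+,\alpha_i^-\in(1,3]$ makes $\abs{\alpha_i^+-\alpha_i^-}<2$. Thus an M-set of size $2d$ cannot exist, so $\abs S\le 2d-1$, and together with the construction this gives $m(\ell_\infty^d)=2d-1$. As a consistency check, for $d=3$ this agrees with Theorem~\ref{polyth}, whose square $2$-faces give $\sup\{n+1\}=5=2\cdot3-1$.
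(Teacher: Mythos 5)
Your proof is correct and follows essentially the same route as the paper: the identical lower-bound construction, the same pigeonhole (slot) argument giving $m\le 2d$, and an exclusion of $m=2d$ that analyses where the pair $\{p_i^+,p_i^-\}$ attains its sup-norm. The only real difference is that you run the paper's final case analysis in contrapositive form --- forcing the norm of $p_i^++p_i^-$ to be attained at coordinate $i$, where $\alpha_i^+,\alpha_i^-\in(1,3]$ makes a value of $\pm2$ impossible, whereas the paper first rules out coordinate $i$ and then derives the contradiction at a coordinate $j\ne i$ --- a rearrangement of the same pairing inequalities.
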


In the next section we esablish some notation and prove some basic properties of M-sets.
In Section~\ref{proofsection} the above theorems are proved.
In the last section we discuss some open questions.

\section{Notation and Basic properties}
We denote the convex hull, affine hull, boundary, interior, and relative interior of a set $S$ in a vector space by $\convex S, \aff S, \bd S, \interior S$ and $\relint S$, respectively.
By {\em polytope} we always mean a convex polytope.
We say that a polytope $P$ is $2$-neighbourly if for any two vertices $x$ and $y$, the segment $[x,y]:=\convex\{x,y\}$ is an edge of $P$, and we say that $P$ is $2$-almost-neighbourly if for any two vertices $x$ and $y$, the segment $[x,y]$ is contained in the boundary of $P$.

We frequently use the following fact:

\begin{lemma}\label{normface}
Let $S$ be a set of unit vectors in a Minkowski space.
If there exists a unit vector $y\in\relint\convex S$, then $\convex S$ is a set of unit vectors.
\end{lemma}
\begin{proof}
By Carath\'eodory's theorem we may assume that $S$ is finite: $S=\{x_1,\dots,x_m\}$.
For any $z\in\convex S$ we have $\norm{z}\leq 1$, since the unit ball is convex.
Alternatively, if $z=\sum_i\lambda_i x_i$ for some $\lambda\geq 0$ such that $\sum_i\lambda_i=1$, then $\norm{z}=\norm{\sum_i\lambda_i x_i}\leq \sum_i\lambda_i\norm{x_i}=1$.

We may assume $z\neq y$.
Then there exist unit vectors $a,b$ on the line $\aff\{y,z\}$ such that $y$ and $z$ are between $a$ and $b$.
Thus there exists $0<\lambda\leq 1$ such that $y=\lambda a + (1-\lambda)z$, and
$1=\norm{y}\leq\lambda\norm{a}+(1-\lambda)\norm{z}=\lambda+(1-\lambda)\norm{z}$, implying $\norm{z}\geq 1$.
\end{proof}

We need the following sharpening of Carath\'eodory's Theorem:

\begin{lemma}\label{caramod}
Let $R\subseteq\R^d$ and $y\in\relint\convex R$.
Then for any $x\in R$ there is an affinely independent set $S\subseteq R$ of at most $d+1$ points such that $x\in S$ and $y\in \relint\convex S$. 
\end{lemma}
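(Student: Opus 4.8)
The plan is to strengthen ordinary Carath\'eodory by an iterative reduction that is careful to retain the prescribed point $x$ and, crucially, to keep all barycentric coordinates strictly positive throughout; this positivity is exactly what the relative-interior conclusion demands, and is what distinguishes the statement from the classical theorem.

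First I would produce a convex representation of $y$ that already involves $x$ with positive weight. Since $y\in\relint\convex R$ and $x\in R\subseteq\convex R$, the standard characterisation of the relative interior yields an $\epsilon>0$ with $y''=(1+\epsilon)y-\epsilon x\in\convex R$; writing $y''$ as a finite convex combination of points of $R$ via ordinary Carath\'eodory and substituting into $y=\tfrac{1}{1+\epsilon}y''+\tfrac{\epsilon}{1+\epsilon}x$ gives
\[
 y=\sum_{i=0}^{m}\lambda_i p_i,\qquad p_0=x,\ \ \lambda_i>0,\ \ \sum_i\lambda_i=1,
\]
with $p_0,\dots,p_m\in R$ distinct (collapsing any repetition of $x$ preserves positivity of its weight).

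The core of the argument is a reduction that removes points until the support is affinely independent, while never disturbing $p_0=x$ and never letting a retained weight drop to zero. Suppose $\{p_0,\dots,p_m\}$ is affinely dependent, so there are scalars $\alpha_i$, not all zero, with $\sum_i\alpha_i p_i=0$ and $\sum_i\alpha_i=0$. Setting $\lambda_i(t)=\lambda_i-t\alpha_i$ keeps $y=\sum_i\lambda_i(t)p_i$ a valid affine representation for every $t$. Because $\sum_i\alpha_i=0$ and the $\alpha_i$ are not all zero, both a positive and a negative $\alpha_i$ occur, so each threshold $t^{+}=\min_{\alpha_i>0}\lambda_i/\alpha_i$ and $t^{-}=\max_{\alpha_i<0}\lambda_i/\alpha_i$ is attained; moving $t$ up to $t^{+}$ (resp.\ down to $t^{-}$) keeps all weights nonnegative and annihilates at least one weight whose index has $\alpha_i>0$ (resp.\ $\alpha_i<0$). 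I would then pick the direction so as to shield $x$: if $\alpha_0>0$ move towards $t^{-}$, if $\alpha_0<0$ move towards $t^{+}$, and if $\alpha_0=0$ either direction leaves $\lambda_0$ fixed. In every case $\lambda_0(t)$ stays strictly positive and the vanishing index differs from $0$, so discarding exactly the vanished points leaves a strictly-positive convex representation of $y$ that still contains $x$ and uses strictly fewer points.

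Iterating, the process terminates with an affinely independent set $S\subseteq R$ containing $x$ and a representation $y=\sum_{p\in S}\lambda_p p$ with all $\lambda_p>0$; for affinely independent $S$ this says precisely that $y\in\relint\convex S$, and affine independence in $\R^d$ forces $\abs{S}\le d+1$. The step I expect to require the most care is the bookkeeping in the reduction: verifying that the sign choice of $t$ genuinely protects the coordinate of $x$ while forcing some \emph{other} coordinate to vanish first, and noting that the simultaneous vanishing of several coordinates causes no trouble since it merely accelerates the reduction. Everything else is routine.
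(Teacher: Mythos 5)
Your proof is correct, but it takes a genuinely different route from the paper's. You run the classical linear-algebraic proof of Carath\'eodory's theorem --- perturbing the coefficient vector along an affine dependence $\sum_i\alpha_i p_i=0$, $\sum_i\alpha_i=0$, until some coefficient vanishes --- with two extra pieces of bookkeeping: the perturbation direction is chosen according to the sign of $\alpha_0$ so that the coefficient of $x$ can never be the one to vanish, and strict positivity of all retained coefficients is maintained throughout, which for an affinely independent support is exactly the statement $y\in\relint\convex S$. Your opening step, prolonging the segment from $x$ through $y$ slightly beyond $y$ inside $\convex R$ so as to force $x$ into the representation with positive weight, is also sound; it is the standard prolongation characterisation of the relative interior of a convex set. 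The paper instead argues geometrically: after reducing to $R$ finite with $\convex R$ full-dimensional, it projects $y$ from $x$ along the ray through $y$ onto a facet $F$ of $\convex R$, invokes the relative-interior refinement of Carath\'eodory inside the $(d-1)$-dimensional facet to place the projected point $y'$ in the relative interior of the convex hull of at most $d$ affinely independent vertices of $F$, and then adjoins $x$ (affine independence survives because $x$ cannot lie in the supporting hyperplane $\aff F$). Each approach has its advantages: yours is self-contained and more general in that it needs no finiteness or full-dimensionality reduction and no polytopal facial structure, and it establishes the relative-interior refinement of Carath\'eodory along the way rather than quoting it; the paper's is shorter once that refinement is taken as known, and makes the geometric picture (why $x$ can always be kept) immediately visible.
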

\begin{proof}
We may assume without loss of generality that $R$ is finite, and that $\convex R$ is $d$-dimensional.
If $x=y$, there is nothing to prove.
Otherwise, let $y'$ be the projection of $y$ along the ray from $x$ through $y$ onto a facet $F$ of $\convex X$.
By Carath\'eodory's theorem, $y'$ is in the relative interior of the convex hull of at most $d$ affinely independent vertices of $F$.
Add the point $x$ to obtain the set $S$.
\end{proof}

\begin{lemma}\label{polytope}
Let $M$ be a finite M-set in a Minkowski space.
Then $M$ is the vertex set of a polytope $P$.

Suppose furthermore that for some two points $x,y\in M$, the open segment $(x,y)$ is contained in the relative interior of some $k$-face $Q$ of $P$, with $k\geq 2$ (thus $[x,y]$ is not an edge).
Then the unit ball has a $k$-face in $\aff Q$.
\end{lemma}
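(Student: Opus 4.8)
The plan is to treat the two assertions separately, the second resting on Lemma~\ref{normface}.

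For the first assertion I would show directly that every point of $M$ is an extreme point of $P=\convex M$. Suppose some $x_0\in M$ is not a vertex. Since the extreme points of the polytope $\convex M$ form a subset of $M$, we may write $x_0=\sum_i\lambda_i x_i$ as a convex combination of points $x_i\in M\setminus\{x_0\}$ with $\lambda_i>0$ and $\sum_i\lambda_i=1$. The key observation is the identity $\sum_i\lambda_i\cdot\tfrac12(x_0+x_i)=\tfrac12 x_0+\tfrac12\sum_i\lambda_i x_i=x_0$, which exhibits $x_0$ as a convex combination of the midpoints $\tfrac12(x_0+x_i)$. Each such midpoint is a unit vector (as $x_0\neq x_i$ both lie in the M-set $M$), so $\norm{x_0}\leq\sum_i\lambda_i=1$, contradicting $\norm{x_0}>1$. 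Hence $M$ is the vertex set of $P$.

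For the second assertion, let $V:=M\cap Q$; by the first part these are exactly the vertices of the $k$-face $Q$, and $x,y\in V$. Put $m:=\tfrac12(x+y)$, a unit vector lying in $\relint Q$, and let $N:=\{\tfrac12(u+v):u,v\in V,\ u\neq v\}$ be the set of all pairwise midpoints of $V$; every point of $N$ is a unit vector. A short affine computation shows that $\convex N$ is $k$-dimensional with $\aff\convex N=\aff Q$. The heart of the proof is to show that $m\in\relint\convex N$. Once this is established, Lemma~\ref{normface} forces $\convex N$ to consist entirely of unit vectors, so $\convex N$ is a $k$-dimensional convex subset of the unit sphere lying in $\aff Q$. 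A routine step then finishes: the norm restricted to the $k$-flat $\aff Q$ is convex and is identically $1$ on a relative neighbourhood of any point of $\relint\convex N$, so $1$ is its minimum value on $\aff Q$; hence $\aff Q$ does not meet $\interior B$, where $B$ is the unit ball, so $\aff Q$ supports $B$ and $B\cap\aff Q$ is the desired $k$-face contained in $\aff Q$.

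The main obstacle is therefore the claim $m\in\relint\convex N$. I would argue by contradiction: since $m\in N\subseteq\convex N$, if $m\notin\relint\convex N$ there is a nonconstant affine functional $\phi$ supporting $\convex N$ at $m$, and evaluating on midpoints gives $\phi(v_i)+\phi(v_j)\leq\phi(x)+\phi(y)$ for all $v_i,v_j\in V$; that is, $\{x,y\}$ is a pair of vertices of maximal $\phi$-sum. A little case analysis shows this cannot happen while $(x,y)\subseteq\relint Q$. First, the vertex of $\{x,y\}$ with the larger $\phi$-value must be a $\phi$-maximiser over $V$, since otherwise replacing it by a global maximiser strictly increases the pair-sum. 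If both $x,y$ are $\phi$-maximisers, they lie on the common proper face $\{z\in Q:\phi(z)=\max_Q\phi\}$, so $[x,y]\subseteq\bd Q$. Otherwise $x$, say, is the unique maximiser and $y$ attains the second-largest $\phi$-value on $V$; by the optimality criterion for linear functions on polytopes a non-optimal vertex has a neighbour of strictly larger value, and the only vertex whose $\phi$-value exceeds $\phi(y)$ is $x$, so $[x,y]$ is an edge of $Q$. In either case $[x,y]$ lies in the relative boundary of $Q$, contradicting $(x,y)\subseteq\relint Q$. This contradiction establishes $m\in\relint\convex N$ and completes the proof.
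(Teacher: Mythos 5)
Your proof is correct, and while your first part coincides with the paper's (your identity $x_0=\sum_i\lambda_i\cdot\half(x_0+x_i)$ is exactly the paper's computation $\norm{2x}=\norm{\sum_i\lambda_i(x+x_i)}\leq 2$), your proof of the second part takes a genuinely different route. The paper works primally: it fixes the set $R$ of vertices of $Q$ adjacent to $x$, applies its sharpened Carath\'eodory lemma (Lemma~\ref{caramod}) to the vertex figure of $Q$ at $x$ to write $y=x+\sum_{s\in S}\lambda_s(s-x)$ with $S\subseteq R$, invokes the first part to force $\sum_s\lambda_s>1$, and then exhibits $\half(x+y)$ explicitly as a relative-interior point of a midpoint polytope by writing down coefficients $\mu_s,\nu_s$, finishing with a separate non-degeneracy argument for the $k$-polytope $R'$. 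You instead take $N$ to be the set of \emph{all} pairwise midpoints of the vertices of $Q$ and argue dually: if $\half(x+y)$ were on the relative boundary of $\convex N$, a supporting affine functional $\phi$, nonconstant on $\aff Q$, would make $\{x,y\}$ a pair of maximal $\phi$-sum among vertices of $Q$; your case analysis (both maximisers, hence both in the proper exposed face $\{z\in Q:\phi(z)=\max_Q\phi\}$; or $x$ the unique maximiser, in which case the fact that a non-optimal vertex has an improving edge forces $[x,y]$ to be an edge) contradicts $(x,y)\subseteq\relint Q$ either way. Both routes then conclude with Lemma~\ref{normface}. Your dual argument is arguably cleaner: it avoids Lemma~\ref{caramod} and the vertex-figure construction entirely, uses the first part only to identify the vertex set of $Q$ with $M\cap Q$, and replaces the paper's coefficient computation by one short affine-hull calculation showing $\aff N=\aff Q$. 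The price is reliance on two standard polytope facts a referee might ask you to justify or cite: that an affine functional attains its maximum over $Q$ at a vertex, and that a vertex which is not optimal has a neighbour along an edge with strictly larger value; also, the uniqueness of the maximiser in your second case deserves the one-line remark that a second maximiser $v$ would give the pair $\{x,v\}$ a strictly larger sum than $\{x,y\}$. Your closing step (convexity of the norm on $\aff Q$ forces $\aff Q\cap\interior B=\emptyset$, so $B\cap\aff Q$ is a $k$-dimensional flat piece of the sphere) is fine as well, and in fact spells out a step the paper leaves implicit.
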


\begin{proof}
We first show that no point is in the convex hull of the remaining points.
Suppose that $x$ is in the convex hull of the remaining vectors.
We may suppose $x = \sum_{i=1}^n \lambda_i x_i$, with $x_i\in M, \lambda_i\geq 0$ and $\sum_{i=1}^n\lambda_i = 1$.
Then 
\begin{align*}
 \norm{2x} = & \norm{\sum_{i=1}^n \lambda_i (x+x_i)} \\
		&\leq \sum_{i=1}^n\lambda_i\norm{x+x_i} \\
		&\leq 2,
\end{align*}
and $\norm{x}\leq 1$, a contradiction.

Secondly, let $x$ and $y$ be two vertices of $P$ such that $(x,y)\subseteq\relint Q$.
Note that $x,y\in Q$.
Let $R$ be the set of vertices $z$ of $Q$ such that $[x,z]$ is an edge of $Q$.
Thus $x,y\notin R$.
Let $z\in R$.
By considering the vertex figure of $Q$ at $x$ and Lemma~\ref{caramod}, there is a simplex with vertex set $S$ satisfying $z\in S\subseteq R$, such that $y-x$ is a strictly positive linear combination of $S-x$:
$$y= x+ \sum_{s\in S}\lambda_s (s-x), \qquad \lambda_s>0.$$
Note that $S$ contains at least $2$ and at most $k$ points.
We must have $\lambda:= \sum_s\lambda_s>1$, otherwise $y$ is in the convex hull of $\{x\}\cup S$, contradicting the first part of the proof.
But now $\half (x+y)$ is in the relative interior of the polytope
$$T_z:=\convex\left\{\half(x+s), \half(y+s):s\in S\right\},$$
as can be seen by taking $\mu_s := \lambda_s/\lambda^2$ and $\nu_s := \lambda_s(1-1/\lambda)/\lambda$; then
$$\half (x+y) = \sum_{s\in S}\mu_s \half(x+s) + \sum_{s\in S}\nu_s \half(y+s),\quad\sum_{s\in S}(\lambda_s+\nu_s)=1.$$
It follows that $T_z$ is in the boundary of the unit ball by Lemma~\ref{normface}.
However, it is possible that $T_z$ is not $k$-dimensional, although it is contained in $\aff Q$.

We also have $\half(x+y)\in\relint R'$, where 
$$R':=\convex\left\{\half(x+z), \half(y+z):z\in R\right\}$$
is a $k$-polytope contained in $\aff Q$:
if not, then $\half(x+y)$ is contained in some facet $F$ of $R'$.
But choose a $z\in R$ such that $\half(x+z)$ or $\half(y+z)$ is not in $F$.
Then, since $\half(x+y)\in\relint T_z$, we obtain a contradiction.
\end{proof}

\section{Proofs}\label{proofsection}

\begin{proof}[Proof of Theorem~\ref{helly:th2}]
By Lemma~\ref{polytope} we immediately get $m(X)\leq 3$.
For the upper bound, find unit vectors $x,y,z$ such that $x+y+z=0$, by inscribing an affinely regular hexagon in the unit disc.
Then $\{2x,2y,2z\}$ is an M-set.
\end{proof}

\begin{proof}[Proof of Theorem~\ref{infvectors}]
Note that the boundary of a non-polygonal $2$-face $F$ contains either infinitely many straight line segments or a strictly convex arc.
In both cases it is easy to see how to choose infinitely many points in the plane of $F$, but outside $F$, such that the midpoint of any two of the points is in $F$; see Figure~\ref{fig4}.
\begin{figure}
\includegraphics[bb = 0 0 91 59]{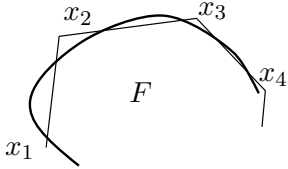}
\begin{picture}(0,0)(91,0)%
\put(35,25){$F$}
\put(-12,5){$x_1$}
\put(10,56){$x_2$}
\put(61,58){$x_3$}
\put(84,33){$x_4$}
\end{picture}%
\caption{Choosing points outside $F$ with midpoint inside $F$}\label{fig4}
\end{figure}
For faces of larger dimension, take an appropriate $2$-dimensional cut to obtain a non-polygonal convex disc $F$.
\end{proof}

\begin{lemma}\label{threelow}
For any Minkowski space of dimension at least three, $m(X)\geq 4$.
\end{lemma}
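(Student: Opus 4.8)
The plan is to reduce to three dimensions and then exhibit an explicit M-set of four points. Since the norm of $X$ restricts to a norm on any $3$-dimensional linear subspace $Y$, and since a set of vectors of norm $>1$ in $Y$ all of whose pairwise midpoints have norm $1$ is equally an M-set in $X$, it suffices to treat $\dim X=3$.

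For the construction I would start from three unit vectors $\alpha,\beta,\gamma$ and form the four points $p_1=\alpha+\beta+\gamma$, $p_2=\alpha-\beta-\gamma$, $p_3=-\alpha+\beta-\gamma$, $p_4=-\alpha-\beta+\gamma$. A direct check shows that the six midpoints $\half(p_i+p_j)$ are exactly $\pm\alpha,\pm\beta,\pm\gamma$, hence automatically unit vectors; and if $\alpha,\beta,\gamma$ are linearly independent the $p_i$ are distinct. Thus the only thing left to arrange is that each $p_i$ has norm $>1$. Since $\{p_1,\dots,p_4\}$ together with its set of antipodes is the vertex set of the parallelepiped spanned by $\alpha,\beta,\gamma$, this amounts to requiring that all eight signed sums $\epsilon_1\alpha+\epsilon_2\beta+\epsilon_3\gamma$ ($\epsilon_i=\pm1$) lie strictly outside the unit ball $B$.

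The heart of the matter is producing such a triple, and the main obstacle here is \emph{strictness}. I would fix John's ellipsoid $E$ of $B$ and apply a linear transformation so that $E$ becomes the Euclidean unit ball; John's theorem then gives $E\subseteq B\subseteq\sqrt{3}\,E$, i.e.\ $\tfrac{1}{\sqrt3}\norm{v}_2\le\norm{v}\le\norm{v}_2$ for all $v$. Choosing $\alpha,\beta,\gamma$ to be the $\norm{\cdot}$-unit rescalings $d_i/\norm{d_i}$ of a Euclidean-orthonormal frame $d_1,d_2,d_3$ keeps them mutually Euclidean-orthogonal, so every signed sum $s$ has $\norm{s}_2=\sqrt{\norm{\alpha}_2^2+\norm{\beta}_2^2+\norm{\gamma}_2^2}\ge\sqrt3$ and therefore $\norm{s}\ge\norm{s}_2/\sqrt3\ge1$. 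This is exactly borderline, which is the real difficulty: the constant $\sqrt3$ cannot be improved in general, the cube attaining it. The way out is to note that I need only \emph{one} of the frame directions to be a non-contact direction, i.e.\ a Euclidean-unit vector $d$ with $\norm{d}<1$: its rescaling then has Euclidean norm $1/\norm{d}>1$, which pushes $\norm{\alpha}_2^2+\norm{\beta}_2^2+\norm{\gamma}_2^2$ strictly above $3$ and yields $\norm{s}\ge\norm{s}_2/\sqrt3>1$ for every signed sum. Such a non-contact direction exists unless $\bd E=\bd B$, i.e.\ unless $B=E$ and the norm is Euclidean; in that single remaining case an orthonormal frame works outright, since then every signed sum has norm $\sqrt3>1$.

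Combining the two cases, the four points $p_1,\dots,p_4$ form an M-set of cardinality four inside the chosen $3$-dimensional subspace, and hence in $X$, so $m(X)\ge4$.
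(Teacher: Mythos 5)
Your proof is correct, and it is worth noting that the final configuration you build is structurally the very same one the paper uses, while the way you produce it is genuinely different. The paper also reduces to dimension three and also takes an M-set of the form $\{\alpha+\beta+\gamma,\ \alpha-\beta-\gamma,\ -\alpha+\beta-\gamma,\ -\alpha-\beta+\gamma\}$ for suitable unit vectors $\alpha,\beta,\gamma$; there these arise from an affinely regular hexagon with vertices $y\pm a, y\pm b, y\pm c$ (where $a+b+c=0$) inscribed in the section of the unit sphere by a plane at height $\lambda\in(\tfrac13,1)$ above the centre, via $\alpha=a-y$, $\beta=b-y$, $\gamma=c-y$, so that the four points become $\{y+2a,\,y+2b,\,y+2c,\,-3y\}$. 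The difference lies entirely in how one certifies that all four points have norm strictly greater than $1$: the paper gets this from a one-line triangle inequality, $\norm{-3y}=3\lambda>1$ and $\norm{y+2a}\geq 2\norm{y+a}-\norm{y}=2-\lambda>1$, at the cost of invoking the classical hexagon-inscription result (cited to Gr\"unbaum); you instead rescale a Euclidean-orthonormal frame for the John ellipsoid, which forces you to confront the tightness of John's constant $\sqrt 3$ (attained by the cube) and resolve it with the non-contact-direction observation plus a separate case when the ball \emph{is} the ellipsoid. Your handling of that borderline issue is sound: if no Euclidean-unit direction lies in $\interior B$ then $\bd E\subseteq\bd B$ forces $B=E$, and otherwise one strict inequality $\norm{d_1}<1$ pushes every signed sum strictly past norm $1$. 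So both arguments are valid; the paper's is shorter once the hexagon fact is granted, while yours trades that fact for John's theorem and a slightly more delicate strictness analysis.
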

\begin{proof}
We may assume without loss of generality that the space is three-dimensional.
Consider any plane $H$ through $0$.
Let $x$ be a unit vector such that $x+H$ supports the unit ball at $x$.
For any $\lambda\in\R$, let $H_\lambda = H+\lambda x$.
Now fix $1/3 < \lambda < 1$.
The intersection of the boundary of the unit ball with $H_\lambda$ is a convex curve $C$.
It is possible to inscribe an affinely regular hexagon in $C$ (see e.g.\ \cite[p.\ 242]{Gr3}) with centre $y\in H_\lambda$.
Let the vertices be $\{y\pm a, y\pm b, y\pm c\}$, where $a,b,c\in H$ satisfy $a+b+c=0$.
Then it is easily seen that $\{y+2a, y+2b, y+2c, -3y\}$ is an M-set.
\end{proof}

\begin{proof}[Proof of Theorem~\ref{threelower}]
Let $S\subseteq X$ be an M-set.
By Lemma~\ref{polytope}, $S$ is the vertex set of a polytope of dimension $2$ or $3$, and $2$-neighbourly, implying $m\leq 4$, or of dimension $1$, implying $m\leq 2$.

The lower bound follows from Lemma~\ref{threelow}.
\end{proof}

\begin{lemma}\label{polylower}
If the unit ball of a $3$-dimensional Minkowski space $X$ has an $n$-gonal $2$-face, then $m(X)\geq n+1$.
\end{lemma}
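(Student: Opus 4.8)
The plan is to construct an explicit M-set of $n+1$ points: $n$ of them lying in the plane of the $n$-gonal face $F$, just outside its edges, together with one ``apex'' placed on the opposite side of the unit ball $B$. Write $\phi$ for the linear functional with $\phi\le 1$ on $B$ and $\phi=1$ exactly on $F$, so that $H':=\aff F=\{\phi=1\}$ is a supporting plane and $B\cap H'=F$. Since the unit ball is centrally symmetric, $-F$ is also an $n$-gonal face, lying in $-H'=\{\phi=-1\}$.

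For the planar points, let $\mu_1,\dots,\mu_n$ be the midpoints of the edges of $F$ and let $\nu_k$ be the outward in-plane normal of the $k$th edge. I would set $p_k:=\mu_k+\varepsilon\nu_k$ for a small $\varepsilon>0$. Each $p_k$ lies in $H'$ but outside $F$, hence $\norm{p_k}>1$ because $B\cap H'=F$. The midpoint $\tfrac12(p_k+p_l)$ equals $\tfrac12(\mu_k+\mu_l)+O(\varepsilon)$; since $\tfrac12(\mu_k+\mu_l)\in\relint F$ for $k\neq l$, for all sufficiently small $\varepsilon$ every such midpoint stays in $\relint F\subseteq\bd B$ and is therefore a unit vector. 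This already gives $n$ points with all pairwise midpoints unit vectors, which is in fact the most one can achieve inside a single face, since for each edge functional at most one point may exceed the value $1$.

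For the apex, let $c_0$ be the centroid of $F$ (a unit vector, as $c_0\in\relint F\subseteq\bd B$) and put $q:=-3c_0$, so that $\phi(q)=-3$ and $\norm{q}=3>1$. Then $\tfrac12(q+p_k)$ lies in the plane $\{\phi=-1\}=-H'$, and it is a unit vector precisely when $-\tfrac12(q+p_k)=\tfrac32 c_0-\tfrac12 p_k\in F$. For $p_k=\mu_k$ this point is $c_0-\tfrac12(\mu_k-c_0)$, the image of $\mu_k$ under the homothety of ratio $-\tfrac12$ centred at $c_0$. Here the key fact is the centroid inequality: a planar convex body with centroid $c_0$ contains its own image under the $(-\tfrac12)$-homothety about $c_0$. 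This places $\tfrac32 c_0-\tfrac12\mu_k$ in $F$, and a short check (for $n=3$ the homothety meets $\bd F$ only in the images of the vertices, not of the $\mu_k$) shows it lies in $\relint F$; so after the same small perturbation $\tfrac12(q+p_k)\in\relint(-F)\subseteq\bd B$ is again a unit vector.

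Combining the two parts, $S=\{p_1,\dots,p_n,q\}$ consists of $n+1$ vectors of norm $>1$ whose pairwise midpoints all lie in $F$ or in $-F$, hence are unit vectors; thus $S$ is an M-set and $m(X)\ge n+1$. The main obstacle is the apex step: one must choose $q$ so that all $n$ midpoints $\tfrac12(q+p_k)$ land simultaneously on the sphere, which a priori is $n$ constraints on the three coordinates of $q$. Routing these midpoints into the \emph{flat} antipodal face $-F$, rather than onto a curved cross-section, reduces the problem to the single planar containment $\tfrac32 c_0-\tfrac12 F\subseteq F$, and it is exactly the centroid inequality that identifies the centroid as the right choice and forces $q=-3c_0$.
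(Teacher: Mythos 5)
Your construction has the same skeleton as the paper's: the $n$ edge-midpoints $\mu_k$ pushed slightly out of $F$ within $\aff F$, plus an apex of the form $-3c$ chosen so that all midpoints with the planar points land in the antipodal face $-F$. The key step, however, is genuinely different. The paper never uses the centroid of $F$: it needs \emph{some} centre $c$ of a $(-\half)$-homothety carrying every $\mu_k$ into $\relint F$, and it produces one via Helly's theorem applied to the sets of admissible centres, the triple case being settled by taking the centroid of the three points $\mu_i,\mu_j,\mu_k$ themselves (their images are the edge-midpoints of the triangle $\convex\{\mu_i,\mu_j,\mu_k\}$, which are strict convex combinations of boundary points of $F$ not lying on a common edge, hence visibly in $\relint F$). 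You instead pin the centre down explicitly as the area centroid $c_0$ of $F$ and invoke the Minkowski--Radon centroid inequality $K\supseteq c_0-\tfrac{1}{d}(K-c_0)$ with $d=2$. That is a legitimate and more direct route: it eliminates Helly entirely and names a concrete apex $-3c_0$.

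The one soft spot is the upgrade from $F$ to $\relint F$. You genuinely need $\tfrac32 c_0-\half\mu_k\in\relint F$: if this point sits on the relative boundary, the $\varepsilon$-perturbation can push the midpoint $\half(q+p_k)$ off $-F$, and since $B\cap(-H')=-F$ it is then no longer a unit vector. The centroid inequality only gives membership in $F$, and it is sharp: for a triangle the image of $F$ under the $(-\half)$-homothety touches $\bd F$. Your parenthetical treats $n=3$, but the statement you actually need for all $n$ is the \emph{equality-case analysis} of the planar centroid inequality: if $\tfrac32 c_0-\half x\in\bd F$ for some $x\in F$, then $F$ must be a triangle and $x$ one of its vertices (so that for $n\geq 4$ the image of $F$ lies in $\relint F$, and for $n=3$ only vertices, never the $\mu_k$, map to the boundary). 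This is true and classical, but it is not a one-line check; you should either prove it or cite it precisely (it can be found in Gr\"unbaum's survey on measures of symmetry, which is reference [3] of the paper). Alternatively you can sidestep it exactly as the paper does, since in the Helly reduction the only fact needed is that the midpoint of two boundary points of $F$ not on a common edge lies in $\relint F$.
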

\begin{proof}
Given the polygon $P:= \convex\{a_1,a_2,\dots, a_n\}$ (with vertices in this order) as a face of the unit ball, we construct $n+1$ points outside the unit ball such that the midpoint of any two is in $P$ or $-P$.

As an intermediate step, let $x_i:=\half(a_i+a_{i+1})$ for $i=1,\dots,n$.
Then the midpoint of any two $x_i$'s is obviously in $\relint P$.
We now want to find a point $x_{n+1}$ such that the midpoint of $x_i$ and $x_{n+1}$ is in $\relint -P$ for all $i=1,\dots,n$.
(We need to have the midpoints in the {\em interior} of the reflected polygon in order to later modify $x_i,i=1,\dots,n$ --- at the moment they are still on the boundary of the unit ball.)
See Figure~\ref{proj}.
\begin{figure}
\includegraphics[bb = 0 0 109 150]{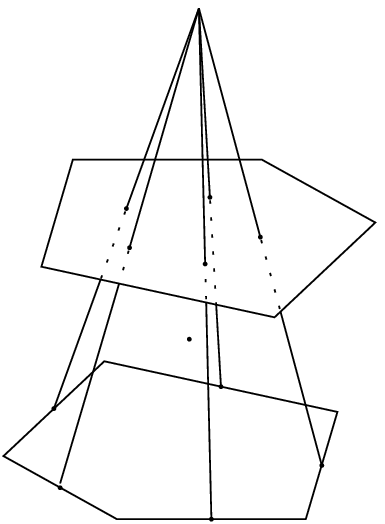}
\begin{picture}(0,0)(109,0)%
\put(23,137){$x_{n+1}$}
\put(40,48){$0$}
\put(-17,17){$P$}
\put(-22,82){$-P$}
\end{picture}%
\caption{}\label{proj}
\end{figure}
Note that such an $x_{n+1}$ would be outside the unit ball (in fact $\norm{x_{n+1}}=3$).
For any $x$, the set $\{\half(x+x_i):i=1,\dots,n\}$ is homothetic to $\{x_1,\dots,x_n\}$, with ratio $\half$.
It is therefore sufficient to show that there is a $-\half$-copy of $\{x_1,\dots,x_n\}$ in $\relint P$.
Let $P_i$ be the homothet of $\relint P$ with centre $x_i$ and factor $\tfrac{2}{3}$.
It is easily seen that $c\in P_i$ iff $c$ is a centre of homothety with factor $-\half$ taking $x_i$ into $\relint P$.
We want to find such a $c$ working for all $i$; it is therefore sufficient to show that $\bigcap_{i=1}^n P_i\neq\emptyset$.
By Helly's theorem it is sufficient to show that any three $P_i$'s have non-empty intersection.
This is equivalent to showing that there is a $-\half$-copy of $\{x_1,\dots,x_n\}$ in $\relint P$ in the case $n=3$.
But there is already a $-\half$-copy of $\{x_1,x_2,x_3\}$ in $(\convex\{x_1,x_2,x_3\})\setminus\{x_1,x_2,x_3\}$: for $c$ take the centroid of $\{x_1,x_2,x_3\}$.
(Note that $(\convex\{x_1,x_2,x_3\})\setminus\{x_1,x_2,x_3\}\subseteq \relint P$.)

We now modify $x_1,\dots,x_n$ by slightly shifting each point in the plane $\aff P$ out of $P$ so that $\norm{x_i}>1$.
As the construction was made such that all the midpoints were either in $\relint P$ or $\relint -P$, the midpoints will now still be in $P$ or $-P$.
\end{proof}

The following lemma was first proved by Zamfirescu \cite{Zamfirescu1969}.
For convenience we include a proof.

\begin{lemma}\label{threepolytope}
Let $P$ be a $2$-almost-neighbourly $3$-polytope.
Then $P$ is either combinatorially equivalent to a triangular prism, or $P$ is a polygonal pyramid. \textup{(}See Figure ~\ref{fig1}.\textup{)}
\end{lemma}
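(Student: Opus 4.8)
The plan is to rephrase the hypothesis in terms of the graph (1-skeleton) and facial structure of $P$. Since $P$ is a $3$-polytope, its graph is planar and $3$-connected by Steinitz's theorem, and I read the faces off a planar embedding. If $x,y$ are distinct vertices with $[x,y]\subseteq\bd P$ but $[x,y]$ not an edge, then $[x,y]$ lies in the affine hull of a single $2$-face, because a straight segment cannot pass from one facet into another across their common edge unless it runs along that edge. Hence $x$ and $y$ are non-adjacent vertices of a common $2$-face with at least four vertices; I call such faces \emph{big}. Thus $2$-almost-neighbourliness is equivalent to: every pair of non-adjacent vertices are non-adjacent vertices of a common big face. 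Since two faces meet in at most an edge, the non-neighbours of a vertex $u$ split as a \emph{disjoint} union over the big faces through $u$, giving the identity
\begin{equation*}
\#\{\text{non-neighbours of }u\}=\sum_{\substack{F\text{ big}\\ u\in F}}\bigl(\abs{F}-3\bigr),
\end{equation*}
where $\abs{F}$ is the number of vertices of $F$. In particular $u$ is adjacent to all other vertices exactly when $u$ lies on no big face; I call such a vertex \emph{free}, and I split the proof according to whether a free vertex exists.

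First I would treat the case of a free vertex $w$. Then every facet through $w$ is a triangle and $w$ is joined to all remaining vertices, so the vertex figure at $w$ displays these vertices in convex position as a cycle $C=a_1\cdots a_{v-1}$ (the link of $w$, where $v$ is the number of vertices), and the facets avoiding $w$ subdivide the disc bounded by $C$. If this subdivision had an interior chord $a_pa_q$, then $C$ would have a vertex strictly on each side of it; choosing one such vertex on each side, say $a_r$ and $a_s$, the pair $a_r,a_s$ is non-adjacent (its segment crosses $a_pa_q$, which is impossible in a planar subdivision) yet lies on no single facet, since every facet lies on one side of the chord. This contradicts the reformulated hypothesis, so there is no chord, the disc is a single face $F$, and $P=\convex(\{w\}\cup F)$ is the polygonal pyramid with apex $w$ over $F$ (the tetrahedron arising when $F$ is a triangle).

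The remaining, and main, case is that $P$ has no free vertex, so by the identity above \emph{every} vertex lies on a big face; here I must show that $P$ is combinatorially a triangular prism. The goal is to produce two vertex-disjoint triangular facets joined by a cyclic band of quadrilateral big faces. I would fix a big face $F$, put $P$ on one side of $\aff F$, and examine the two faces extreme in the direction normal to $F$ together with the facets wrapping around $F$, using the identity and planarity (no $K_{3,3}$) to control how many vertices can attach and how large neighbouring faces can be. The crux is that the two ``cap'' faces must be triangles: this is exactly where the \emph{full} strength of the hypothesis is needed, since a prism over a $k$-gon with $k\ge4$ has big caps but a cap-to-cap diagonal lying on no face, so it fails $2$-almost-neighbourliness even though it is prismatic. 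Ruling out large faces in this way is the step I expect to be the main obstacle; once the caps are forced to be triangles, the quadrilateral band connecting two triangles has length three and $P$ is a triangular prism, completing the dichotomy.
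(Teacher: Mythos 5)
Your reformulation (two non-adjacent vertices must be non-adjacent vertices of a common ``big'' face), the disjointness identity, and the free-vertex dichotomy are all correct, and your Case A is complete and genuinely different from the paper's treatment of the pyramid case: where the paper applies Radon's theorem to $\{x_1,x_2,x_3,y_1,y_2\}$ to show only one vertex lies off the plane of the largest face, you read the pyramid directly off the link of a free vertex by a chord/planarity argument. That half stands on its own.

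The genuine gap is Case B, which is the substantive half of the lemma: you never prove that a $2$-almost-neighbourly $3$-polytope with no free vertex is combinatorially a triangular prism. What you give is a plan (``fix a big face $F$\dots examine the two faces extreme in the direction normal to $F$\dots using the identity and planarity (no $K_{3,3}$)''), and you yourself flag the crux --- that the two cap faces must be triangles --- as ``the step I expect to be the main obstacle,'' with no argument supplied for it. For comparison, the paper's prism case is a short concrete local analysis: if two faces with at least four vertices share an edge $[a,b]$, with surrounding vertices $f,a,b,e$ on one face and $d,a,b,c$ on the other, then $[d,e]$ and $[c,f]$ lie in the boundary, hence in a common face containing $\convex\{c,d,e,f\}$, which forces both faces to be quadrilaterals; a separation argument then shows $\convex\{b,c,e\}$ and $\convex\{a,d,f\}$ are faces, so $P$ is a triangular prism. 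Your Case B could in fact be closed by combining that local analysis with what you already have: with no free vertex a big face exists, and either two big faces share an edge (run the analysis above), or no two big faces are adjacent, in which case all faces adjacent to the largest face are triangles and the Radon (or your link) argument makes $P$ a pyramid whose apex is a free vertex, contradicting Case B. As submitted, however, the prism half of the dichotomy is asserted rather than proved.
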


\begin{figure}
\includegraphics[bb = 0 0 299 124]{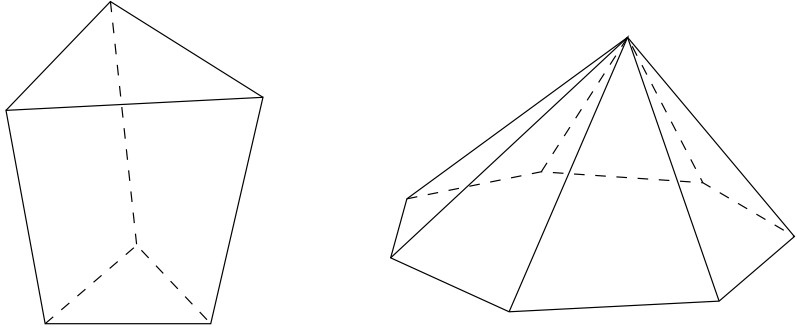}
\caption{Combinatorial triangular prism. Polygonal pyramid.}\label{fig1}
\end{figure}

\begin{proof}
Suppose that two faces, an $n$-gon $F$ and an $n'$-gon $G$ , are joined by an edge, with $n,n'\geq 4$.
\begin{figure}
\includegraphics[bb = 0 0 166 106]{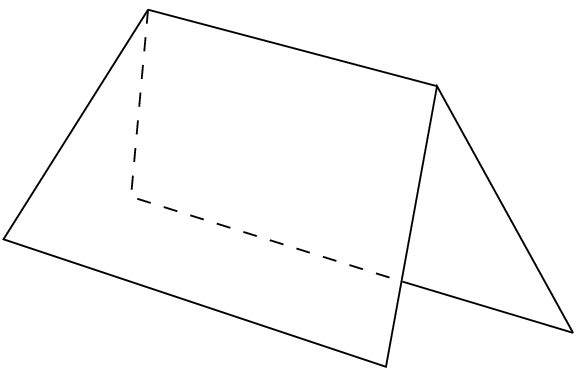}
\begin{picture}(0,0)(166,7)%
\put(25,103){$a$}
\put(130,80){$b$}
\put(163,16){$c$}
\put(23,53){$d$}
\put(112,5){$e$}
\put(-14,28){$f$}
\put(145,50){$G$}
\put(40,11){$F$}
\end{picture}%
\caption{}\label{fig2}
\end{figure}
Let the vertices of the joining edge be $a$ and $b$. Let the vertices of $F$ in the neighbourhood of $a$ and $b$ be $f,a,b,e$, and those of $G$ be $d,a,b,c$.
See Figure~\ref{fig2}.
Then $[d,e]$ and $[c,f]$ are on the boundary of $P$ and must be contained in the same face of $P$.
Thus, $\convex\{d,c,e,f\}$ is part of this face.
It follows that $n=n'=4$.
If $\convex\{b,c,e\}$ is not a face of $P$, then $\aff\{b,c,e\}$ strictly separates $a$ from some vertex $x$ of $P$.
Since $x$ cannot be part of $F$ or $G$, it follows that $[a,x]\nsubseteq\bd P$, a contradiction.
Thus, $\convex\{b,c,e\}$ is a face of $P$, and similarly, $\convex\{a,d,f\}$ is a face.
Thus $P$ is a triangular prism.

If $P$ is not combinatorially equivalent to a triangular prism it follows that if two faces meet at an edge, one must be a triangle. 
Let $x_1x_2\dots x_n$ be a face with the largest number of vertices $n$.
Then all faces adjacent to $F$ are triangles.
Suppose there are two points $y_1,y_2$ not in the plane $\aff F$.
By Radon's theorem applied to $S:=\{x_1,x_2,x_3,y_1,y_2\}$, there is a partition of $S$ into sets $T,U$ such that $\convex T\cap\convex U\neq\emptyset$.
Since $S$ is the vertex set of a polytope, $T$ and $U$ both contain at least two points.
We may assume that $\#T=2$ points, and $\#U=3$.
It is clear that $T=\{x_i,y_j\}$ for some $i,j$, say $T=\{x_1,y_1\}$.
Thus $[x_1,y_1]$ intersects $\convex U$.
Since $[x_1,y_1]$ is on the boundary of $\convex S$, it cannot intersect $\relint\convex U$.
Therefore, it must intersect the relative interior of some edge of $\convex U$, and we obtain a face of at least $4$ vertices adjacent to $F$, a contradiction.
Thus there is exactly one point not in $\aff F$, and $P$ is a pyramid.
\end{proof}

\begin{proof}[Proof of Theorem~\ref{polyth}]
The lower bound in the equation is proved in Lemma~\ref{polylower}.
By Lemmas~\ref{polytope} and \ref{threepolytope}, any finite M-set is the vertex set of a planar polygon, a combinatorial triangular prism, or a polygonal pyramid.

It is easily seen that a triangular prism is impossible:
Considering the midpoints of a quadrilateral face $F$, we see that the unit ball has a face in the plane of $F$.
Since this holds for all three quadrilateral faces, we contradict the central symmetry of the unit ball.

We may assume that $m\geq 5$, since otherwise
$$m(X)\leq 4\leq \sup\{n+1: \text{there is an $n$-gon}\}.$$
The only remaining possibility is that the convex hull of the M-set is a polygon of $\geq 5$ vertices or a polygonal pyramid with $\geq 4$ vertices.
In both cases, there is an M-set with a polygon $P$ as convex hull.
Thus, by Lemma~\ref{polytope} there is a $2$-face $Q$ on the boundary of the unit ball in the plane of $P$.
By hypothesis, $Q$ is a polygon.
Choose any point in $\relint Q$ and triangulate $\aff Q$ into unbounded regions by joining this point to the vertices of $Q$, as in Figure~\ref{poly}.
\begin{figure}
\includegraphics[bb = 0 0 95 89]{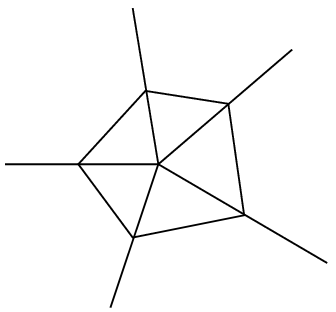}
\caption{}\label{poly}
\end{figure}
Obviously, no two points of the M-set can be in the same region.
Thus, $P$ has at most $n$ vertices, and $m\leq n+1$.

It remains to show that an infinite M-set is impossible.
If there is such a set, then, as before, any finite subset must be a polygon or a polygonal pyramid.
It is easily seen that the polygons must all lie in the same plane.
But in this plane the unit ball has some $n$-gon as a $2$-face, and therefore the number of points in this plane is bounded above by $n$, a contradiction.
\end{proof}

\begin{lemma}\label{smoothing}
Let $S$ be the vertex set of a centrally symmetric polytope in $\R^d$.
Then there is a smooth, strictly convex norm $\norm{\cdot}$ on $\R^d$ 
such that $\norm{x}=1$ for all $x\in S$.\qed
\end{lemma}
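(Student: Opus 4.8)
The plan is to realise the vertices $S$ as boundary points of a centrally symmetric, strictly convex body with a smooth boundary, and then take the gauge (Minkowski functional) of that body as the norm: the gauge of a centrally symmetric, $C^1$, strictly convex body with $0$ in its interior is automatically a smooth, strictly convex norm. Writing $P:=\convex S$, I may assume that $P$ is full-dimensional. If instead $P$ spans a proper subspace $V$, I first carry out the construction inside $V$ to obtain a suitable norm $f$ on $V$ and then extend to $\R^d=V\oplus V^\perp$ by the $\ell_2$-product $\norm{y+z}:=\sqrt{f(y)^2+\abs{z}^2}$, where $y\in V$, $z\in V^\perp$, and $\abs{\cdot}$ is euclidean. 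A routine check (using that the square of a strictly convex norm is a strictly convex function, and that $f$ is $C^1$ off the origin) shows this is again a strictly convex, smooth norm agreeing with $f$ on $V$, so $\norm{v}=1$ for all $v\in S$. Hence I assume $0\in\interior P$.

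Step 1, a strictly convex body through the vertices. Each vertex $v\in S$ is exposed: there is a functional $u_v$ with $\ipr{u_v}{v}=1$ and $\ipr{u_v}{x}<1$ for every $x\in P\setminus\{v\}$, and by central symmetry I take $u_{-v}=-u_v$. For large $R>0$ let $B_v$ be the euclidean ball of radius $R$ internally tangent to $\{\ipr{u_v}{\cdot}=1\}$ at $v$, i.e.\ centred at $v-Ru_v/\abs{u_v}$. Since $u_v$ exposes $v$, the cone of feasible directions of $P$ at $v$ lies strictly inside $\{w:\ipr{u_v}{w}<0\}$, and a short estimate then shows that for $R$ large $P\subseteq B_v$ with $v\in\bd B_v$. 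Put $K_1:=\bigcap_{v\in S}B_v$. Then $K_1\supseteq P$ is convex, and $K_1=-K_1$ because $B_{-v}=-B_v$. As an intersection of balls $K_1$ is strictly convex: if a nondegenerate segment lay in $\bd K_1$, its midpoint would lie on some sphere $\bd B_v$ while both endpoints lie in the closed ball $B_v$, contradicting strict convexity of $B_v$. Finally each $v\in S$ lies on $\bd K_1$.

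Step 2, smoothing. The decisive observation is that the vertices are already regular points: for $R$ large one has $v\in\interior B_w$ for every $w\neq v$, so on a neighbourhood $U_v$ of $v$ the boundary $\bd K_1$ coincides with the single smooth sphere $\bd B_v$, with a unique tangent hyperplane and positive curvature. Hence every non-smooth point of $K_1$ lies in the compact set $\bd K_1\setminus\bigcup_v U_v$, which is disjoint from $S$. I therefore smooth $K_1$ only on this set, leaving it unchanged near each vertex, by a standard localized regularization (mollifying the convex function that describes $\bd K_1$ along the ridges, then reconvexifying), carried out symmetrically with an even mollifier so that central symmetry is preserved. This yields a centrally symmetric body $K$ with $C^\infty$, positively curved boundary still satisfying $S\subseteq\bd K$, and its gauge is the required norm.

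The main obstacle is Step 2: smoothing $K_1$ while simultaneously (i) keeping it fixed near the vertices, (ii) preserving convexity and strict convexity, and (iii) preserving central symmetry. Requirements (i) and (ii) are exactly what rule out the naive global mollification of the support function, which enlarges the body and would push the vertices into the interior; the resolution is to confine the smoothing to neighbourhoods of the ridges, where $K_1$ is locally the graph of a convex function that can be mollified without disturbing $\bd K_1$ near $S$. Everything else is elementary.
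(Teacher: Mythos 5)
The paper offers no proof of this lemma at all (it is stated with a \qed{}), so there is nothing to compare against and your argument must stand on its own. Your reduction to the full-dimensional case is fine, and Step~1 is correct and complete: since the tangent cone of a polytope at a vertex is finitely generated, its unit directions satisfy $\ipr{u_v}{w}\leq -\epsilon\abs{u_v}$ uniformly, which gives $P\subseteq B_v$ for $R\geq\diam P/(2\epsilon)$; the intersection $K_1=\bigcap_v B_v$ is then centrally symmetric, strictly convex, has $S$ on its boundary, and (for $R$ large) coincides with the single sphere $\bd B_v$ near each $v$. The problem is Step~2, and it is exactly where you locate it: \emph{mollifying the local graph of $\bd K_1$ only near the ridges, then reconvexifying} is not a standard procedure and does not work as described. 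To mollify only near the ridges you must blend the mollified graph with the untouched graph via a cutoff, and a cutoff interpolation of two convex functions is in general not convex; if you instead restore convexity afterwards by taking a convex envelope, you lose control of smoothness and of the boundary near $S$. Smoothing a convex body while keeping it fixed on a prescribed portion of its boundary is a genuinely delicate problem (it is the content of Ghomi's ``optimal smoothing'' theorems, which in addition require positive curvature near the fixed set -- your spheres do provide that, so the step could be closed by citing such a result, but it is not elementary folklore, and your sketch is a restatement of the difficulty rather than a solution).

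There is, however, an elementary device that finishes your construction and avoids mollification entirely: a rolling-ball \emph{opening}. For small $\epsilon>0$ set $A:=\{x: x+\epsilon B\subseteq K_1\}$ ($B$ the euclidean unit ball) and $K:=A\oplus\epsilon B$. Since erosion distributes over intersections, $A=\bigcap_v B(c_v,R-\epsilon)$ is again a centrally symmetric intersection of balls, hence strictly convex. Its outer parallel body $K$ is strictly convex, because the face of $K$ in direction $u$ is the face of $A$ in direction $u$ translated by $\epsilon u/\abs{u}$, so a segment in $\bd K$ would force a segment in $\bd A$. It is smooth in the sense needed here (unique supporting hyperplane at every boundary point): each $x\in\bd K$ lies on a ball $y+\epsilon B\subseteq K$ with $\abs{x-y}=\epsilon$, so any supporting hyperplane of $K$ at $x$ supports that ball, hence is unique. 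Finally, $K\subseteq K_1$, and for $\epsilon$ small each $v\in S$ remains in $K$: writing $\hat u_v=u_v/\abs{u_v}$, the ball $B(v-\epsilon\hat u_v,\epsilon)$ lies in $B_v$ automatically and lies in every $B_w$, $w\neq v$, because $v\in\interior B_w$; thus $v\in\bd K$. The gauge of $K$ is then the required smooth, strictly convex norm. (Note that $K$ is only $C^{1,1}$; if you insist on a $C^\infty$ body, as your write-up does, then you genuinely need a result of Ghomi's type, not a routine regularization.)
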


\begin{proof}[Proof of Theorem~\ref{ex4d}]
We use Carath\'eodory's moment curve:
For $t\in\R$, define $\phi(t):=(\cos t,\sin t,\cos 2t,\sin 2t)\in\R^4$.
Choose distinct $t_1,\dots,t_n\in[0,\pi/4]$, and let $x_i:=\phi(t_i)$.
The M-set will be $\{x_1,\dots,x_n\}$, which is incidentally the vertex set of a cyclic polytope, which is $2$-neighbourly; see \cite{Gale}.
Let $K:=\convex\{\pm(x_i+x_j):1\leq i<j\leq n\}$.
We now show that for each $i\neq j$, $\pm(x_i+x_j)$ is a vertex of $K$.

Define $p_{ij}(t):=\bigl(1-\cos(t-t_i)\bigr)\bigl(1-\cos(t-t_j)\bigr)$.
Then $p_{ij}(t)\geq 0$ with equality iff $t=t_i$ or $t=t_j$, and also $p_{ij}(t)<1/10$ for all $t\in[0,\pi/4]$.
By expanding $p_{ij}$, it is found that $p_{ij}(t) = \ipr{y_{ij}}{\phi(t)}+c_{ij}$, where
$$y_{ij}:= \left(-\cos t_i-\cos t_j,-\sin t_i-\sin t_j,\half\cos(t_i+t_j),\half\sin(t_i+t_j)\right)$$
and $c_{ij}:= 1+\half\cos(t_i-t_j)$.

We now show that $\{x\in\R^4:\ipr{y_{ij}}{x}+2c_{ij}=0\}$ is a support hyperplane of $K$ containing only $x_i+x_j$.
This follows from the following calculations:
\begin{align*}
\ipr{y_{ij}}{x_i+x_j}+2c_{ij} &= p_{ij}(t_i)+p_{ij}(t_j) = 0,\\
\ipr{y_{ij}}{-x_i-x_j}+2c_{ij} &= -p_{ij}(t_i)-p_{ij}(t_j)+4c_{ij}\\
&= 4c_{ij} > 0,\\
\ipr{y_{ij}}{x_i+x_k}+2c_{ij} &= p_{ij}(t_i)+p_{ij}(t_k)=p_{ij}(t_k)>0,\\
\ipr{y_{ij}}{-x_i-x_k}+2c_{ij} &= -p_{ij}(t_i)-p_{ij}(t_k)+4c_{ij}\\
&= -p_{ij}(t_k)+4+2\cos(t_i+t_j)\\
&\geq -\tfrac{1}{10}+4+\sqrt{2} > 0,\\
\ipr{y_{ij}}{x_k+x_l}+2c_{ij} &= p_{ij}(t_k)+p_{ij}(t_l)>0,\\
\ipr{y_{ij}}{-x_k-x_l}+2c_{ij} &= p_{ij}(t_k)-p_{ij}(t_l)+4c_{ij}\\
&\geq -\tfrac{1}{10}-\tfrac{1}{10}+4+\sqrt{2} > 0,
\end{align*}
for distinct $i,j,k,l$.
By Lemma~\ref{smoothing} there is a strictly convex, smooth norm on $\R^4$ such that $\norm{x_i+x_j}=2$ for all distinct $i,j$.
By strict convexity, at most one of the $x_i$'s can have norm $\leq 1$.
Note that $\dim K=4$.
This follows from the fact that any $5$ of the $x_i$'s are affinely independent, since $\{x_1,\dots,x_n\}$ is the vertex set of a polytope, as noted previously.
\end{proof}

\begin{theorem}\label{l2bound}
Let $x_1,\dots,x_m\in\ell_2^d$ be distinct vectors satisfying 
\begin{equation}\label{normeq}
\norm{x_i+x_j}=1 \text{ for all distinct }i,j.
\end{equation}
Then $m\leq d+1$ if $d\neq 2$, and $m\leq 4$ if $d=2$.
Both bounds are sharp.
\end{theorem}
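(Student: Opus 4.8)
The plan is to translate the hypothesis into a statement about inner products of the $x_i$ and to show that the points can be affinely \emph{dependent} only when $m=4$; affine independence then forces $m\le d+1$ in every dimension. Writing $a_i:=\norm{x_i}^2$ and expanding $\norm{x_i+x_j}^2=1$ gives the relations $\ipr{x_i}{x_j}=\half(1-a_i-a_j)$ for $i\ne j$, so all the off-diagonal inner products are determined by the squared norms. This is the structural input that drives everything.

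Now suppose there is a nontrivial affine dependence $\sum_i\lambda_i x_i=0$ with $\sum_i\lambda_i=0$. Taking the inner product with $x_i$ yields $\sum_j\lambda_j\ipr{x_i}{x_j}=0$ for every $i$; substituting the relation above and using $\sum_{j\ne i}\lambda_j=-\lambda_i$ together with $S:=\sum_{j}\lambda_j a_j$, the cross terms collapse to the single identity
\[ \lambda_i(4a_i-1)=S \qquad (1\le i\le m). \]
I expect the whole argument to turn on this identity, so checking the algebra carefully is the first real step. Two cases arise. If $S=0$, then every index with $\lambda_i\ne0$ has $a_i=\tfrac14$, i.e.\ $\norm{x_i}=\half$; but then for two such indices $\norm{x_i+x_j}=1=\norm{x_i}+\norm{x_j}$, which by strict convexity of $\ell_2$ forces $x_i=x_j$, contradicting distinctness. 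Hence $S\ne0$, which forces every $\lambda_i\ne0$ and $a_i=\tfrac14+S/(4\lambda_i)$; feeding this back into $S=\sum_i\lambda_i a_i$ with $\sum_i\lambda_i=0$ gives $S=Sm/4$, hence $m=4$.

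From this the upper bounds follow at once. For $d\ge3$, if $m\ge d+2$ the points are affinely dependent and so $m=4$, contradicting $m\ge d+2\ge5$; thus $m\le d+1$. For $d=1$ any three of the points are collinear, hence affinely dependent, which would force their count to be $4$, absurd for a triple, so $m\le2=d+1$. For $d=2$, if $m\ge5$ the points are affinely dependent and $m=4$, again a contradiction, so $m\le4$.

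For sharpness I would exhibit explicit configurations. For $d\ge2$ a regular simplex of $d+1$ vertices centred at $0$ with circumradius $r$ satisfies $\norm{x_i+x_j}^2=2r^2(1-1/d)$, so scaling to $r^2=d/\bigl(2(d-1)\bigr)$ gives $m=d+1$ (and for $d=1$ the pair $\{0,e_1\}$ works). The case $d=2$ needs an extra point: the vertices of an equilateral triangle inscribed in the unit circle together with its centre $0$ give four points with all six pairwise sums of norm $1$, so $m=4$ is attained. The main obstacle I anticipate is isolating the affine-dependence computation cleanly enough that the exceptional value $m=4$ emerges, and recognising that the planar anomaly is precisely the case in which the simplex circumradius equals $1$ --- which happens only for $d=2$ --- allowing the centre to be adjoined as a fourth point.
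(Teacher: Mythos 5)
Your proof is correct, and it is organized in a genuinely more unified way than the paper's. The core algebra is the same: the paper derives exactly your identity $\lambda_j(4\ipr{x_j}{x_j}-1)=\sum_i\lambda_i\ipr{x_i}{x_i}$ (its equation \eqref{lambdaj}) from an affine dependence. But the paper only runs this argument for $d\neq 2$: it assumes $m=d+2$, sums over $j$ to get $(m-4)\sum_i\lambda_i\ipr{x_i}{x_i}=0$, uses $m\neq 4$ to force $S=0$, and then kills the dependence via strict convexity (at most one $x_j$ of norm $\half$). For $d=2$ it must instead invoke Theorem~\ref{helly:th2}, i.e.\ the geometric convex-position machinery of Lemma~\ref{polytope}, to get the bound $m\le 4$. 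You rearrange the same two ingredients into a single sharper lemma --- \emph{any} affinely dependent configuration satisfying \eqref{normeq} has exactly $m=4$ --- by ruling out $S=0$ outright (two distinct points of norm $\half$ with $\norm{x_i+x_j}=1$ contradict strict convexity) and then extracting $m=4$ from $S\neq 0$. This makes the $d=2$ upper bound fall out of the same algebra, so your proof of this theorem is self-contained and does not depend on the paper's earlier results; your closing remark that the planar anomaly corresponds to the simplex circumradius $d/(2(d-1))=1$ exactly at $d=2$ is also a nice explanatory bonus absent from the paper. The constructions for sharpness (scaled regular simplex; triangle plus centre for $d=2$) coincide with the paper's. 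One trivial point you should make explicit: in the $S=0$ case, the existence of \emph{two} indices with $\lambda_i\neq 0$ follows because a nontrivial dependence with $\sum_i\lambda_i=0$ cannot have exactly one nonzero coefficient.
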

\begin{proof}
Note that by the strict triangle inequality, there is at most one $x_i$ of norm $\leq\half$.
Thus for $d=2$, Theorem~\ref{helly:th2} takes care of the upper bound, and for the lower bound, take $x_1,x_2,x_3$ to be unit vectors at $120^\circ$ angles, and $x_4=0$.
Then \eqref{normeq} is obviously satisfied.

We now assume that $d\neq 2$, and for the sake of contradiction that $m=d+2$.
Then $x_1,\dots,x_m$ are affinely dependent, and there exist $\lambda_1,\dots,\lambda_m\in\R$ such that
$$\sum_{i=1}^m\lambda_i x_i = 0, \sum_{i=1}^m\lambda_i = 0, \quad(\lambda_1,\dots,\lambda_m)\neq 0.$$
From \eqref{normeq} it follows that $2\ipr{x_i}{x_j} = 1-\ipr{x_i}{x_i} - \ipr{x_j}{x_j}$ for all $i\neq j$.
Therefore, for any $j$,
\begin{align*}
0 &= \sum_{i=1}^m\lambda_i\ipr{x_i}{x_j} \\
  &= 2\lambda_j\ipr{x_j}{x_j} + \sum_{i\neq j}\lambda_i\left(1-\ipr{x_i}{x_i} - \ipr{x_j}{x_j}\right) \\
  &= 2\lambda_j\ipr{x_j}{x_j} - \lambda_j\left(1-2\ipr{x_j}{x_j}\right) - \sum_{i=1}^m\lambda_i\ipr{x_i}{x_i} \\
  &= \lambda_j\left(4\ipr{x_j}{x_j} - 1\right) - \sum_{i=1}^m\lambda_i\ipr{x_i}{x_i}.
\end{align*}
Thus
\begin{equation}\label{lambdaj}
\lambda_j(4\ipr{x_j}{x_j} -1) = \sum_{i=1}^m\lambda_i\ipr{x_i}{x_i}.
\end{equation}
Summing over all $j$, we obtain $$(m-4)\sum_{i=1}^m\lambda_i\ipr{x_i}{x_i}=0.$$
Since $d\neq 2$, we have $m\neq 4$, and by \eqref{lambdaj} we obtain that $\lambda_j=0$ for all $j$ such that $\norm{x_j}\neq\half$.
Since there is at most one $x_j$ of norm $\half$, and $\sum_i\lambda_i = 0$, we obtain that $\lambda_j=0$ for all $j$, contradicting affine dependence.

To obtain $d+1$ vectors satisfying \eqref{normeq}, consider the vertices of a regular simplex with centroid at the origin.
By symmetry, the sum of any two vectors will have the same norm.
By a suitable scaling, \eqref{normeq} will be satisfied.
\end{proof}

Theorem~\ref{l2m} now follows immediately from Theorems~\ref{l2bound} and \ref{helly:th2}.

\begin{proof}[Proof of Theorem~\ref{inftybound}]
Let $\{x_1,\dots,x_m\}$ be an M-set in $\ell_\infty^d$.
For each $i$ there is a coordinate of $x_i$ of absolute value larger than $1$.
If $m\geq 2d+1$, then by the pigeon-hole principle, there are three $x_i$'s with the same coordinate of absolute value $>1$.
Some two of these will have the same sign, and therefore the norm of their sum will be $>2$, a contradiction.

Therefore, $m\leq 2d$.
Suppose now that equality holds.
Then, by the same considerations as above, we may reorder the $x_i$'s such that $x_i^{(i)}>1$ and $x_{i+d}^{(i)}<-1$ for $i=1,\dots,d$, and $x_i^{(j)}\leq 1$ in all other cases.
If $x_i^{(i)}+x_{i+d}^{(i)}\geq 2$ for some $i$, then $x_1^{(i)}\geq 2-x_{i+d}^{(i)}>3$.
Thus $x_j^{(i)}<-1$ for all $j\neq i$, since $\norm{x_i+x_j}_\infty =2$.
But then $\norm{x_j+x_{j'}}_\infty >2$ for any two $j,j'\neq i$, a contradiction.
Similarly, $x_i^{(i)}+x_{i+d}^{(i)}\leq -2$ is impossible.
Since $\norm{x_i+x_{i+d}}_\infty=2$, there is a coordinate $j\neq i$ such that $x_i^{(j)} + x_{i+d}^{(j)} = \pm 2$.
Since $\abs{x_i^{(j)}},\abs{x_{i+d}^{(j)}}\leq 1$, we must have $x_i^{(j)}=x_{i+d}^{(j)}=\pm 1$.
But then either $\norm{x_i+x_j}_\infty>2$ or $\norm{x_{i+d}+x_j}_\infty>2$, a contradiction.
Thus $m\leq 2d-1$.

For the lower bound, let $e_1,\dots,e_d\in\R^d$ be the standard unit vectors.
For $i=1,\dots,d-1$, let $x_i:=2e_i+ e_d$ and $x_{i+d-1}:=-2e_1+ e_d$. Finally, let $x_{2d-1}:=-3 e_d$.
\end{proof}

\section{Concluding remarks}

Any centrally symmetric polytope $P$ defines a Minkowski space $X$ for which $m(P)=m(X)$ is finite.
It would be of some interest to study this number for specific polytopes.
In Theorem~\ref{inftybound} we have already determined $M(P)$ for cubes.

We make the following simple observations.

\begin{observation}\label{polytopeupper}
If a Minkowski space $X$ of dimension at least $2$ has a polytope with $f$ facets as unit ball, then $m(X)\leq f-1$.
\end{observation}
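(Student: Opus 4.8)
The plan is to first produce an injection from the M-set into the set of facets of the unit ball, giving $m(X)\le f$, and then to exclude the extremal case $m=f$ by a short geometric argument, thereby sharpening the bound to $f-1$. Write $B$ for the unit ball, with facets $F_1,\dots,F_f$ and outer normals $u_1,\dots,u_f$ normalised so that $B=\bigcap_k\{x:\ipr{u_k}{x}\le 1\}$ and $F_k=\{x\in B:\ipr{u_k}{x}=1\}$; by central symmetry the normals occur in antipodal pairs $\pm u$. Given an M-set $\{x_1,\dots,x_m\}$, set $V_i:=\{k:\ipr{u_k}{x_i}>1\}$, the set of facets that $x_i$ \emph{violates}. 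Since $\norm{x_i}>1$ we have $x_i\notin B$, so each $V_i$ is non-empty. If a facet $F_k$ were violated by two points $x_i,x_j$, then $\ipr{u_k}{\half(x_i+x_j)}>1$, contradicting $\half(x_i+x_j)\in\bd B$; hence the $V_i$ are pairwise disjoint and $m\le\sum_i\abs{V_i}=\bigl|\bigcup_iV_i\bigr|\le f$.

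It then remains to rule out $m=f$. In that case the disjoint non-empty sets $V_1,\dots,V_f$ partition the $f$ facets into singletons, so every point violates exactly one facet and every facet is violated by exactly one point. I would choose the facet $F$ (normal $u$) whose violator $x_a$ realises the global maximum $M:=\max_{k,i}\ipr{u_k}{x_i}=\ipr{u}{x_a}>1$, and let $x_b\neq x_a$ be the unique violator of the antipodal facet $-F$. The midpoint $\half(x_a+x_b)$ lies on some facet $F_l$, so $\ipr{u_l}{x_a}+\ipr{u_l}{x_b}=2$, and I would finish by casing on $l$. If $u_l=-u$, then $\ipr{-u}{x_b}=2+M>M$, contradicting the maximality of $M$. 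If $u_l=u$, then $\ipr{u}{x_b}=2-M$, and since $x_b$ violates $-F$ this is $<-1$, forcing $M>3$; but then for every other point $x_c$ the relation $\half(x_a+x_c)\in B$ gives $\ipr{u}{x_c}\le 2-M<-1$, so all $m-1$ remaining points violate $-F$, impossible as $-F$ has a single violator and $f\ge 4$. Finally, if $u_l\neq\pm u$, then $x_a$ and $x_b$ each violate only $F$ and $-F$ respectively, so $\ipr{u_l}{x_a}\le 1$ and $\ipr{u_l}{x_b}\le 1$; as their sum is $2$ both equal $1$, and taking the unique violator $x_c$ of $F_l$ (distinct from $x_a$) the relation $\half(x_a+x_c)\in B$ yields $\ipr{u_l}{x_c}\le 2-\ipr{u_l}{x_a}=1$, contradicting $\ipr{u_l}{x_c}>1$.

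The disjointness argument for $m\le f$ is the easy half; the crux is the case analysis that removes the final facet. The two ideas that make it go through are selecting the \emph{globally} largest violation (which kills the cases $u_l=\pm u$) and, in the generic case, playing the midpoint of $x_a$ with the third point $x_c$ violating $F_l$ against the fact that $x_a$ already sits on the supporting hyperplane of $F_l$. The hypothesis $\dim X\ge 2$ enters only through $f\ge 4$, which is what guarantees in the case $u_l=u$ that at least two further points are forced onto $-F$; in dimension one the bound genuinely degrades to $f=2$, as the example $\{4,-2\}\subseteq\R$ shows.
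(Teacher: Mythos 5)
Your proof is correct, but it takes a genuinely different route from the paper. The paper's proof is a one-line reduction: the facet functionals (one from each antipodal pair, normalised as in your setup) give an isometric linear embedding of $X$ into $\ell_\infty^{f/2}$, which carries M-sets to M-sets, so Theorem~\ref{inftybound} immediately yields $m(X)\leq 2(f/2)-1=f-1$; the hypothesis $\dim X\geq 2$ enters, exactly as you note, only to ensure $f\geq 4$, i.e.\ $f/2\geq 2$, so that the theorem applies. You instead re-derive the content intrinsically: your disjointness argument ($m\leq f$) is the facet-normal version of the paper's pigeonhole step for $\ell_\infty^d$, and your exclusion of $m=f$ plays the role of the paper's exclusion of $m=2d$, but it is organised differently --- you pick the \emph{globally} maximal violation $M=\ipr{u}{x_a}$ and case on which facet contains the midpoint of the two antipodal violators, whereas the paper's argument for Theorem~\ref{inftybound} does coordinate-by-coordinate bookkeeping ($x_i^{(i)}+x_{i+d}^{(i)}$ neither $\geq 2$ nor $\leq -2$, then a coordinate $j\neq i$ with sum $\pm 2$). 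What each buys: the paper's proof is shorter and reuses an already-established theorem; yours is self-contained, avoids the detour through $\ell_\infty^{f/2}$, and the global-maximum trick cleanly kills the cases $u_l=\pm u$ that the paper handles by separate sign arguments. Your closing example $\{4,-2\}\subseteq\R$ showing the bound degrades to $f$ in dimension one is a nice addition not present in the paper.
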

\begin{proof}
Imbed the space isometrically into $\ell_\infty^{f/2}$ and apply Theorem~\ref{inftybound}.
Note that since the dimension is at least $2$, $f\geq 4$.
\end{proof}

\begin{observation}\label{polytopelower1}
Let $X$ be a Minkowski space with a polytopal unit ball.
Let $f$ be the largest number of facets that a proper face of the unit ball can have.
Then $m(X)\geq f$.
\end{observation}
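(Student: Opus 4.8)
The plan is to realise $f$ as the cardinality of an M-set built entirely inside a single, well-chosen face. Let $F$ be a proper face of the unit ball $B$ attaining the maximal number $f$ of facets, let $k=\dim F$, and fix a supporting hyperplane $H$ with $F=B\cap H$; then $\aff F\subseteq H$. The crucial opening observation, which comes for free, is that every point of $\aff F\setminus F$ has norm $>1$: such a point lies in $H$ but not in $B\cap H=F$, hence outside $B$. Thus all the work can be done in the $k$-dimensional space $\aff F$, in which $F$ sits on the unit sphere and its relative complement consists of vectors of norm $>1$, so I never have to argue separately that the constructed points lie outside the ball.

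First I would produce $f$ points whose pairwise midpoints already lie on the sphere. Enumerate the facets $G_1,\dots,G_f$ of $F$, let $H_\ell=\{\ipr{a_\ell}{\cdot}=b_\ell\}$ be the hyperplane of $\aff F$ carrying $G_\ell$ with $F$ on the side $\ipr{a_\ell}{\cdot}\le b_\ell$, and pick $x_i\in\relint G_i$ for each $i$. Here $\relint F=\{y\in\aff F:\ipr{a_\ell}{y}<b_\ell \text{ for every facet } \ell\}$. The key point, which I expect to be the heart of the argument, is that a point of $\relint G_i$ satisfies $\ipr{a_i}{\cdot}=b_i$ but $\ipr{a_\ell}{\cdot}<b_\ell$ \emph{strictly} for all $\ell\neq i$ (otherwise it would lie on the lower-dimensional face $G_i\cap G_\ell$, contradicting membership of $\relint G_i$). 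A short case check on the facet inequalities — one case for $\ell=i$, one for $\ell=j$, one for $\ell\notin\{i,j\}$ — then shows that for $i\neq j$ every interior point of $[x_i,x_j]$, in particular $\tfrac12(x_i+x_j)$, satisfies all facet inequalities strictly, hence lies in $\relint F$ and is a unit vector.

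Since the $x_i$ themselves lie on the relative boundary of $F$ they have norm $1$, so it remains to perturb. Each $x_i\in\relint G_i$ admits a direction within $\aff F$ leading out of $F$, so I replace $x_i$ by a nearby $x_i'\in\aff F\setminus F$; by the opening observation $\norm{x_i'}>1$. Because the finitely many midpoints $\tfrac12(x_i+x_j)$ lie in the \emph{open} set $\relint F$, a sufficiently small simultaneous perturbation keeps every $\tfrac12(x_i'+x_j')$ inside $\relint F$ and hence on the sphere, and the $x_i'$ remain distinct. Then $\{x_1',\dots,x_f'\}$ is an M-set of cardinality $f$, giving $m(X)\geq f$. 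The only genuinely delicate points are the strictness of the non-incident facet inequalities on $\relint G_i$ (which is precisely what places each midpoint in the open set $\relint F$, so that the final perturbation is harmless) and choosing that perturbation small enough to preserve all midpoint memberships at once; both are routine once the supporting-hyperplane observation is in hand.
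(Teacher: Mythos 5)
Your proof is correct and takes essentially the same approach as the paper: the paper's proof also works inside $\aff F$, placing points ``just outside the centroid of each facet of $F$'' --- exactly your construction with the centroid as the chosen relative-interior point. You have simply supplied the details (strict facet inequalities forcing midpoints into $\relint F$, and the smallness of the outward perturbation) that the paper compresses into the word ``clearly''.
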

\begin{proof}
Let $F$ be a face of the unit ball that has $f$ facets.
Choose points $x_1,\dots,x_f$ just outside the centroid of each facet of $F$ in $\aff F$.
Then $\{x_1,\dots,x_f\}$ is clearly an M-set.
\end{proof}

\begin{observation}\label{polytopelower2}
In any $d$-dimensional Minkowski space with a polytope as unit ball, there exists an M-set of cardinality at least $d+1$.
\end{observation}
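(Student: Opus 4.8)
The plan is to reduce to one clean case and then augment the M-set of Observation~\ref{polytopelower1} by a single ``apex'' point. Write $P$ for the (centrally symmetric) unit ball and assume $d\geq 2$; the case $d=1$ is trivial, e.g.\ $\{5,-3\}\subseteq\R$. First I would dispose of the easy situation: if $P$ has a facet that is not a simplex, then that facet is a $(d-1)$-dimensional polytope which is not a simplex, hence has at least $d+1$ facets, so by Observation~\ref{polytopelower1} (whose quantity $f$ is then $\geq d+1$) we already get $m(X)\geq d+1$. From now on I therefore assume that every facet of $P$ is a $(d-1)$-simplex.

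Fix such a facet $F=\convex\{v_1,\dots,v_d\}$ and a supporting functional $u^*$ with $F\subseteq\{u^*=1\}$ and $P\subseteq\{u^*\leq 1\}$. Since $\aff F=\{u^*=1\}$ is a hyperplane avoiding the origin, the points $v_1,\dots,v_d$ are linearly independent and form a basis of $\R^d$; because $u^*(v_i)=1$, the coordinates in this basis are precisely barycentric, so $\sum_i\alpha_i v_i$ lies in $\relint F$ iff every $\alpha_i>0$ and $\sum_i\alpha_i=1$, and in $\relint(-F)$ iff every $\alpha_i<0$ and $\sum_i\alpha_i=-1$ (the antipodal facet $-F$ being a facet since $P=-P$).

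For the construction I would set $c_i:=\tfrac1{d-1}\sum_{j\neq i}v_j$ (the centroid of the facet of $F$ opposite $v_i$), $c_0:=\tfrac1d\sum_i v_i$ (the centroid of $F$), and take the apex $x_{d+1}:=-3c_0$ together with base points $x_1,\dots,x_d$ obtained by perturbing $c_1,\dots,c_d$ slightly outward within $\aff F$. Reading off coordinates, $\half(c_i+c_j)$ has all coordinates positive with sum $1$, hence lies in $\relint F$; and $\half(c_i-3c_0)$ has all coordinates negative with sum $-1$, hence lies in $\relint(-F)$. This second computation is the heart of the matter: the requirement $u^*\bigl(\half(c_i+x_{d+1})\bigr)=-1$ forces $u^*(x_{d+1})=-3$, which is exactly what singles out the multiple $-3c_0$ (of norm $3$), mirroring the apex $-3y$ of Lemma~\ref{threelow}.

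Finally I would close the loop. All base--base and base--apex midpoints lie in $\relint F$ or $\relint(-F)$, hence on $\bd P$, so each has norm $1$, while the apex has norm $3>1$. Perturbing each $c_i$ out of $F$ inside the hyperplane $\{u^*=1\}$ moves $x_i$ off $P$, giving $\norm{x_i}>1$, and by continuity keeps the finitely many midpoints inside the open sets $\relint F$ and $\relint(-F)$. Thus $\{x_1,\dots,x_{d+1}\}$ is an M-set of cardinality $d+1$. The only genuinely non-routine step is locating the apex; once the barycentric bookkeeping above is set up, everything else is immediate.
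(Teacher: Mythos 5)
Your proposal is correct and follows essentially the same route as the paper: reduce to the case where the chosen facet $F$ is a $(d-1)$-simplex (otherwise Observation~\ref{polytopelower1} already gives $f\geq d+1$), take the perturbed facet-centroids of $F$ as in Observation~\ref{polytopelower1}, and adjoin the apex $-3c$ where $c$ is the centroid of $F$. The only difference is that you spell out, via barycentric coordinates, the verification that the midpoints land in $\relint F$ and $\relint(-F)$, which the paper dismisses as ``easily seen.''
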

\begin{proof}
Choose any facet $F$ of the unit ball.
Let $F$ have $f$ facets.
By the argument of Theorem~\ref{polytopelower1}, $m(X)\geq f$.
If $f\geq d+1$, we are done.
Otherwise $F$ is a simplex with centroid $c$, say.
It is easily seen that we may add $-3c$ to the M-set obtained in Theorem~\ref{polytopelower1}, to obtain an M-set of $d+1$ points.
\end{proof}

These estimates are not sharp.
As a start, consider cross-polytopes.
By Observation~\ref{polytopelower2} we obtain $m(\ell_1^d)\geq d+1$, and Observation~\ref{polytopelower1} doesn't give any larger lower bound.
As upper bound we only have $2^d-1$, by Observation~\ref{polytopeupper}.
Perhaps there is also a linear upper bound?

\begin{conjecture}
 Each $d$-dimensional Minkowski space has an M-set of at least $d+1$ points.
\end{conjecture}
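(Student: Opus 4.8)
The plan is to reduce the general statement to the polytopal case already settled in Observation~\ref{polytopelower2}, via approximation and compactness, and to isolate the genuine difficulty in the limiting step. The cases $d\leq 2$ are covered by Theorem~\ref{helly:th2}, so assume $d\geq 3$ and let $K$ be the unit ball of $X$. Choose centrally symmetric polytopes $P_n$ with $P_n\to K$ in the Hausdorff metric; writing $\norm{\cdot}_n$ for the norm with unit ball $P_n$, one has $\norm{\cdot}_n\to\norm{\cdot}_K$ uniformly on bounded sets. By Observation~\ref{polytopelower2} each $(\R^d,\norm{\cdot}_n)$ carries an M-set $S_n=\{x_1^{(n)},\dots,x_{d+1}^{(n)}\}$, so $\norm{x_i^{(n)}+x_j^{(n)}}_n=2$ and $\norm{x_i^{(n)}}_n>1$. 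The $S_n$ are uniformly bounded, since for any three indices $\norm{2x_k^{(n)}}_n\leq\norm{x_i^{(n)}+x_k^{(n)}}_n+\norm{x_j^{(n)}+x_k^{(n)}}_n+\norm{x_i^{(n)}+x_j^{(n)}}_n=6$; so after passing to a subsequence $x_i^{(n)}\to x_i^\ast$ for each $i$, and in the limit $\norm{x_i^\ast+x_j^\ast}_K=2$ for all $i\neq j$ while $\norm{x_i^\ast}_K\geq 1$ for all $i$.

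What remains — and this is the crux — is to upgrade the limit configuration to a genuine M-set, that is, to force the $x_i^\ast$ to be distinct and to satisfy the \emph{strict} inequality $\norm{x_i^\ast}_K>1$. Both can fail: points may coalesce, or a limit point may land exactly on the unit sphere. This is not an artefact of a bad choice of $P_n$; when $K$ is smooth and strictly convex, the facet-centroid M-sets furnished by Observation~\ref{polytopelower2} are built on facets of $P_n$ that shrink to a single support point of $K$, so the obvious approximants genuinely threaten to collapse. The natural remedy is to study the solution set $V=\{(x_1,\dots,x_{d+1}):\norm{x_i+x_j}_K=2 \text{ for all } i\neq j\}$ near the limit and to show that the degenerate locus $\{\,\exists\, i:\norm{x_i}_K=1\,\}\cup\{\,\exists\, i\neq j: x_i=x_j\,\}$ cannot contain a whole neighbourhood of the limit within $V$. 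A dimension heuristic is encouraging: $V\subseteq(\R^d)^{d+1}$ is cut out by $\binom{d+1}{2}$ equations inside a space of dimension $d(d+1)$, so $\dim V\geq\binom{d+1}{2}\geq 1$, whereas the degenerate locus is of strictly smaller dimension, so a perturbation off it should exist. Making this rigorous for a merely convex norm $\norm{\cdot}_K$ — in particular ruling out that the component of $V$ produced by the limit is \emph{entirely} degenerate — is the main obstacle, and is precisely where the smooth, strictly convex behaviour underlying the preceding Conjecture is delicate.

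An alternative, constructive route generalises the hexagon argument of Lemma~\ref{threelow}: there one fixes a support direction, a parallel central hyperplane $H$, a level set $C=\bd K\cap(H+\lambda x)$, and inscribes an affinely regular hexagon with centre $y$, obtaining the M-set $\{y+2a,\,y+2b,\,y+2c,\,-3y\}$ with $a+b+c=0$. The direct analogue in dimension $d$ seeks $a_1,\dots,a_d\in H$ with $\sum_i a_i=0$ so that $\{y+2a_i\}\cup\{-3y\}$ is an M-set; here each pairwise midpoint $y+(a_i+a_j)$ must lie on $C$. For $d=3$ this is automatic because $a_i+a_j=-a_k$, but for $d\geq 4$ it imposes $\binom{d}{2}$ separate incidence conditions on $C$ that no longer reduce to central symmetry of one inscribed polytope; forcing them simultaneously, most plausibly by a topological degree argument producing a balanced inscribed configuration rather than an explicit one, is the obstacle on this route. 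I would pursue the compactness argument first, since it already delivers every equality $\norm{x_i+x_j}_K=2$ and leaves only the non-degeneracy to be secured.
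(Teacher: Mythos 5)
This statement is not proved in the paper at all: it is stated as an open conjecture, verified only for $d=2$ (Theorem~\ref{helly:th2}) and $d=3$ (Lemma~\ref{threelow}), and for spaces with polytopal unit balls (Observation~\ref{polytopelower2}); the paper explicitly remarks that settling $d\geq 4$ ``would undoubtedly be more difficult than Observation~\ref{polytopelower2}''. So there is no proof in the paper to compare against, and the real question is whether your argument closes the general case. It does not, and to your credit you say so yourself: both routes you sketch end at an unresolved obstacle rather than at a conclusion.

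The gap in the compactness route is exactly where you place it, but it is worse than a removable technicality. For a smooth, strictly convex $K$, the approximating M-sets from Observation~\ref{polytopelower2} consist of points just outside the centroids of the facets of a facet $F_n$ of $P_n$ (plus possibly one extra point $-3c_n$); as $P_n\to K$ the facet $F_n$ shrinks to a single support point $x$ of $K$, so in the limit $d$ of your points coalesce at the unit vector $x$ and the remaining one goes to $-3x$. Moreover, this degenerate limit sits inside the set $\{(x,\dots,x,-3x):\norm{x}_K=1\}$, which is a $(d-1)$-dimensional subset of your solution set $V$ consisting \emph{entirely} of degenerate configurations (every pairwise sum there has norm $2$); likewise the diagonal $\{(x,\dots,x):\norm{x}_K=1\}$ lies in $V$. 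So $V$ provably contains positive-dimensional, wholly degenerate pieces through the very point your limit produces, which kills any hope that ``degenerate configurations are lower-dimensional in $V$'' can be established without new ideas. The dimension heuristic itself is also not valid here: the defining equations $\norm{x_i+x_j}_K=2$ are merely convex, not smooth or algebraic, so counting equations gives no lower bound on the local dimension of the real solution set, and even granting such a bound, nothing rules out that the entire component of $V$ through the limit is degenerate. The constructive route has the same status: for $d\geq 4$ the $\binom{d}{2}$ incidence conditions no longer follow from the central symmetry that drives the inscribed-hexagon argument of Lemma~\ref{threelow}, and no degree argument is actually supplied. What you have, then, is a correct reduction of the conjecture to a non-degeneracy problem together with an accurate diagnosis of why that problem is hard --- consistent with the paper's own assessment that the case $d\geq 4$ is open --- but not a proof.
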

Theorem~\ref{helly:th2} and Lemma~\ref{threelow} show that this conjecture is true for $d=2$ and $3$.
Showing it for $d\geq 4$ would undoubtedly be more difficult than Observation~\ref{polytopelower2}.

\section*{Acknowledgements}
The author thanks Valeriu Soltan for drawing his attention to the work of Zamfirescu, and the anonymous referee for his suggestions on improving the paper.

\end{document}